\documentclass[12pt,oneside]{amsart}
\usepackage[utf8]{inputenc}
\usepackage[margin=1in]{geometry}
\usepackage{amscd}
\usepackage{hyperref}
\usepackage{fontenc}
\usepackage{textcomp}
\usepackage{comment}
\geometry{a4paper}
\theoremstyle{plain}
\usepackage{enumitem}

\usepackage{amssymb,amsmath,amsthm}
\usepackage{mathrsfs}
\usepackage{xcolor}
\usepackage[all]{xy}
\usepackage{cancel}
\usepackage{graphicx}
\usepackage{lscape}
\usepackage{cite}

\newtheorem*{fact}{Fact}
\newtheorem*{facts}{Facts}
\newtheorem{thm}{Theorem}

\newtheorem{prop}{Proposition}

\newtheorem{lem}{Lemma}
\newtheorem{rema}{Remark}
\newtheorem{remasub}{Remark}

\newtheorem{defi}{Definition}

\newtheorem{coro}{Corollary}
\newtheorem*{nota}{Notation}

\newcommand\nn{\mathbb{N}}
\newcommand\zz{\mathbb{Z}}

\newcommand\rr{\mathbb{R}}
\newcommand\cc{\mathbb{C}}
\newcommand\kk{\mathbb{K}}

\newcommand\ff{\mathbb{F}}
\newcommand\ii{\mathbb{I}}
\newcommand\ungra{\mathbf{1}}

\newcommand\fra[2]{\displaystyle\frac{#1}{#2}}

\newcommand\tq{\mbox{ } | \mbox{ }}

\newcommand\opnorm{\Vert}

\newcommand\cali[1]{\mathcal{#1}}
\newcommand*\diff{\mathop{}\!\mathrm{d}}

\DeclareMathOperator{\sln}{SL}

\DeclareMathOperator{\cat}{CAT(-1)}
\DeclareMathOperator{\isom}{Isom}

\DeclareMathOperator{\supp}{supp}
\DeclareMathOperator{\rk}{rk}

\author[1]{Adrien Boyer}\thanks{Université Paris 7, aadrien.boyer@gmail.com}
\author[2]{Antoine Pinochet Lobos}\thanks{I2M, CNRS UMR7373, Université d'Aix-Marseille, antoine.pinochet-lobos@univ-amu.fr}
\author[3]{Christophe Pittet}\thanks{I2M, CNRS UMR7373, Université d'Aix-Marseille et Section de Mathématiques, Faculté des Sciences, Université de Genève, pittet@math.cnrs.fr\\
The authors acknowledge support of the FNS grant 200020-178828.}

\title{Radial rapid decay does not imply rapid decay}

\date{\today}

\begin{document}

\maketitle

\begin{abstract}
We provide a new, dynamical criterion for the radial rapid decay property. We work out in detail the special case of the group $\Gamma := \sln_2(A)$, where $A := \ff_q[X,X^{-1}]$ is the ring of Laurent polynomials with coefficients in $\ff_q$, endowed with the length function coming from a natural action of $\Gamma$ on a product of two trees, to show that is has the radial rapid decay (RRD) property and doesn't have the rapid decay (RD) property. The criterion also applies to irreducible lattices in semisimple Lie groups with finite center endowed with a length function defined with the help of a Finsler metric. These examples answer a question asked by Chatterji and moreover show that, unlike the RD property, the RRD property isn't inherited by open subgroups.
\end{abstract}

\tableofcontents

\section{Introduction}

The rapid decay property (RD), which can be stated as an inequality between two different norms on the convolution algebra of a group, was first introduced in \cite{HAAGERUP} and further developped in \cite{JOLISS}. It became a subject of great importance since V. Lafforgue discovered its connection with the Baum-Connes conjecture \cite{LAFFORGUE}. Any connected, semisimple Lie group has RD \cite{CHATPITSAL}, but it is an open question (asked in \cite{VALETTEBAUMCONNES} and now known as Valette's conjecture) to know whether cocompact lattices inherit the rapid decay property.

The radial rapid decay property (RRD) is a weakening of RD, first studied in \cite{VALETTE97}, and consists in restricting the RD inequality to the class of radial functions. The strategy of proof used in \cite{CHATPITSAL} for Lie groups, a reduction to radial functions, raised hope for a solution of Valette's conjecture when Perrone \cite{Perrone} managed to show that cocompact lattices have RRD. In this context, Chatterji asked for a group having RRD but not having RD \cite[p. 57]{chatterjird}. 

In this paper, we provide a sufficient, dynamical condition, for a group to have property RRD. We then study the case of the discrete group $\Gamma := \sln_2(A)$, where $A := \ff_q[X,X^{-1}]$ is the ring of Laurent polynomials with coefficients in $\ff_q$, that acts naturally on a product of trees, and on the product of the boundaries of these trees. Using the dynamical criterion, we prove that $\Gamma$ has RRD. Moreover, noticing that $\Gamma$ contains a lamplighter group as a subgroup, we prove that $\Gamma$ doesn't have RD, and give therefore a negative answer to Chatterji's question; finally, this example shows that RRD isn't inherited by open subgroups, whereas RD is. At first sight, this example may look surprising, since containing an amenable subgroup with exponential growth is a well-known obstruction to having RD.

\begin{nota} Throughout the paper, we will use the $\ll$ notation, as an alternative to the big $O$ notation. Precisely, \[f(n) \ll g(n)\]means that there is $M \in \rr$ such that for every sufficiently large $n$, we have that $\vert f(n) \vert \leq M \vert g(n) \vert$.
\end{nota}

\section{Statement of the results}

\subsection{Statement of the criterion}

\label{defi}

In this section, we give the necessary definitions and notation in order to state the dynamical criterion; we then investigate the range of application of the criterion.

\subsubsection{General framework}

Let $G$ be a locally compact group. Let $e\in G$ denote the identity element.

\begin{defi}[Length functions] A \textbf{length function} on $G$ is a map $L : G \rightarrow \rr_+$ such that
\begin{enumerate}
\item $L(e) = 0$;
\item $\forall g \in G$, $L(g^{-1}) = L(g)$;
\item $\forall g,h \in G$, $L(gh) \leq L(g) + L(h)$.
\end{enumerate}

If $E\subset G$ is any subset, we define, for all $t \in \rr_+$, $E_t=E\cap L^{-1}([0,t])$. A length function $L$ is said to be \textbf{proper} if $G_t$ is compact, for all $t$.
\end{defi}

\begin{nota} When it is unambiguous, we use the notation \[C_n := \{g \in G \tq L(g) \in [n,n+1)\}.\]If necessary, we add the reference to the group or the length function by writing $C^G_n$ or $C^L_n$.
\end{nota}

\begin{defi}[Radial functions] Let $L : G \rightarrow \rr_+$ be a proper length function. Denote by $C_c(G)$ the space of compactly-supported functions on $G$ (if $G$ is a discrete group, $C_c(G)$ is the space of finitely-supported functions and we denote it by $\cc[G]$).

We say that $f \in C_c(G)$ is \textbf{radial} if \[\forall g_1,g_2 \in G, \quad L(g_1) = L(g_2) \Rightarrow f(g_1) = f(g_2).\] Denote by $C^{rad}_c(G)$ the set of radial functions (if $G$ is a discrete group, $C^{rad}_c(G)$ is the space of radial functions of finite support, and we denote it by $\cc[G]^{rad}$).
\end{defi}

Let $\mu$ be a left Haar measure on $G$. Let $\rr[X]$ denote the algebra of polynomial functions in one variable $X$ and with real coefficients.

For $f \in C_c(G)$, let \[L(f) := \max\{ L(g) \tq g \in \supp(f)\},\] and for $f \in C_c(G)$ and $\xi \in L^2(G,\mu)$, consider the convolution \[f*\xi := \left(g \mapsto \int_{G} f(h) \xi(h^{-1}g) d\mu(h)\right).\]

Let us denote by $\Vert \cdot \Vert_{p \to q}$ the norm of a continuous operator between a $L^p$ and a $L^q$ space. Recall $\xi \mapsto f*\xi$ is a continuous linear operator on $L^2(G,\mu)$. For reasons of concision, let us denote \[\opnorm f \opnorm_{op} := \Vert \xi \mapsto f * \xi \Vert_{2\to 2}.\] We can now define the rapid decay property.

\begin{defi}[Rapid decay]
We say that $G$ has \textbf{property RD} with respect to $L$ if \[\exists P \in \rr[X],\ \  \forall f \in C_c(G),\ \  \opnorm f \opnorm_{op} \leq P(L(f)) \Vert f \Vert_2\] and we say that it has \textbf{radial property RD} with respect to $L$ if \[\exists P \in \rr[X],\ \  \forall f \in C^{rad}_c(G),\ \  \opnorm f \opnorm_{op} \leq P(L(f)) \Vert f \Vert_2.\]
\end{defi}

For further information on property RD, see \cite{chatterjird} and \cite{GARNCRD}.

The dynamical criterion asserts, for short, that if there is a suitable action of the group on some probability space, then the group has RRD. In order to state it precisely, we will need additional definitions and notation. Let $(B,\cali{T})$ be a measurable space, $G \curvearrowright B$ be a measurable action.

\begin{defi}[Quasi-invariant measure]
We say that a measure $\nu$ on $(B,\cali{T})$ is \textbf{quasi-invariant} if the action preserves $\nu$-null-sets, that is, for every $g \in G$, for every $C \in \cali{T}$ such that $\nu(C) = 0$, then $\nu(g C) = 0$.
\end{defi}

Let $\nu$ be a $\sigma$-finite quasi-invariant measure for the action $G \curvearrowright (B,\cali{T})$. We consider the following objects, which existence relies on the Radon-Nikodym theorem.

\begin{defi}[Radon-Nikodym cocycles, Koopman representation and Harish-Chandra function] 

We call \[\begin{array}{rcl}
c: G \times B &\rightarrow &\rr^*_+\\
(g,b) &\mapsto &\fra{\diff {g^{-1}}_* \nu}{\diff \nu}(b)\end{array}\]the \textbf{Radon-Nikodym cocycle}.

The formula \[\begin{array}{rcl}\pi : G&\rightarrow &\mathcal{U}(L^2(B ,\nu))\\
g&\mapsto &\left(h \mapsto (b \mapsto c(g^{-1},b)^\frac{1}{2}h(g^{-1}b)\right)\\
\end{array}\] defines a unitary representation called the \textbf{Koopman representation} associated to the action $\Lambda \curvearrowright (B,\nu)$. The associated \textbf{Harish-Chandra function} is defined as \[\Xi := g \mapsto \int_{B} c(g^{-1},b)^{\frac{1}{2}}\diff\nu(b) = \langle \pi(g)\ungra_B,\ungra_B\rangle.\]
\end{defi}

The main result of this paper is the following theorem.

\begin{thm}[Dynamical criterion for RRD]\label{shalomtrick}

Let $\Lambda$ be a discrete group with a proper length function $L$. Let $(B,\nu)$ be a $\sigma$-finite probability space. Let $\pi : \Lambda \rightarrow \mathcal{U}(L^2(B))$ be the Koopman representation arising from a measurable action $\Lambda \curvearrowright (B,\mu)$ leaving $\nu$ quasi-invariant, and let $\Xi$ be the corresponding Harish-Chandra function.

Assume that there is $M \in \rr$, $P \in \rr[X]$, such that
\begin{enumerate}
\item $\forall n \in \nn, \displaystyle\sup_{\gamma \in C_n} \Xi(\gamma) \leq \fra{P(n)}{\sqrt{\vert C_n \vert}}.$ 
\item $\forall n \in \nn, \left\Vert \fra{1}{\vert C_n\vert} \sum_{\gamma \in C_n} \fra{\pi(\gamma)}{\Xi(\gamma)} \right\Vert_{2\to 2} \leq M.$ 
\end{enumerate}

Then $\Lambda$ has RRD with respect to $L$.
\end{thm}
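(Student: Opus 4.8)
The plan is to establish the radial RD inequality by estimating the operator norm of a radial function directly, using the two hypotheses to control the contributions of each annulus $C_n$ separately and then summing over $n$. Given a radial $f \in \cc[\Lambda]^{rad}$, I would decompose it as $f = \sum_{n=0}^{N} f_n$, where $N = L(f)$ and $f_n$ is the restriction of $f$ to the annulus $C_n$; since $f$ is radial, $f_n = a_n \ungra_{C_n}$ for a scalar $a_n = f(\gamma)$ (any $\gamma \in C_n$). By the triangle inequality, $\opnorm f\opnorm_{op} \leq \sum_{n=0}^{N} \opnorm f_n\opnorm_{op}$, so it suffices to bound $\opnorm f_n\opnorm_{op} = |a_n| \cdot \opnorm \ungra_{C_n}\opnorm_{op}$ by a polynomial-in-$n$ multiple of $\|f_n\|_2 = |a_n|\sqrt{|C_n|}$. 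Thus the whole theorem reduces to the uniform estimate
\begin{equation}\label{eq:annulus}
\opnorm \ungra_{C_n}\opnorm_{op} \ll P_1(n)\sqrt{|C_n|}
\end{equation}
for some polynomial $P_1$; then a standard Cauchy--Schwarz argument over the $N+1 \leq L(f)+1$ annuli recovers the desired polynomial bound with respect to $\|f\|_2$.

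The core is therefore to prove \eqref{eq:annulus}, and this is where the two hypotheses enter. The operator $\xi \mapsto \ungra_{C_n} * \xi$ equals $\sum_{\gamma \in C_n} \lambda(\gamma)$, where $\lambda$ is the left regular representation. The key idea, which I expect to be the main obstacle, is to relate the regular representation $\lambda$ to the Koopman representation $\pi$ appearing in the hypotheses. The natural device is a comparison between matrix coefficients: hypothesis (1) says that the Harish-Chandra function $\Xi(\gamma) = \langle \pi(\gamma)\ungra_B, \ungra_B\rangle$ is small, of order $P(n)/\sqrt{|C_n|}$, uniformly over $C_n$. I would weight the sum by $\Xi$ and write
\[
\sum_{\gamma \in C_n} \lambda(\gamma) = \sum_{\gamma \in C_n} \Xi(\gamma)\,\frac{\lambda(\gamma)}{\Xi(\gamma)},
\]
so that the uniform bound of hypothesis (1) factors out a scalar $\ll P(n)/\sqrt{|C_n|}$ times $|C_n|$, leaving a normalized average whose norm must be controlled by a constant. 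The role of hypothesis (2) is precisely to bound such a normalized average $\frac{1}{|C_n|}\sum_{\gamma\in C_n}\frac{\pi(\gamma)}{\Xi(\gamma)}$ by $M$; the crux is that the operator norm of the corresponding average of $\lambda(\gamma)$ is dominated by the same quantity computed with $\pi$.

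The technical heart is therefore the comparison $\opnorm \sum_{\gamma} b_\gamma \lambda(\gamma)\opnorm_{2\to2} \leq \opnorm \sum_\gamma b_\gamma \pi(\gamma)\opnorm_{2\to2}$ (or a version with the weights $b_\gamma = 1/(|C_n|\Xi(\gamma))$), valid for nonnegative coefficients. This is a standard but delicate fact: the regular representation is, in a suitable sense, ``smallest'' among representations, so positive combinations of $\lambda(\gamma)$ have norm at most that of the same combination in any representation admitting a cyclic-like vector whose matrix coefficients dominate those of $\lambda$. Concretely, I would invoke the inequality comparing the norm of a convolution operator with the norm obtained through a representation weakly contained in, or dominating, the regular one, using that $\langle\lambda(\gamma)\delta_e,\delta_e\rangle = \delta_{\gamma,e}$ against $\langle\pi(\gamma)\ungra_B,\ungra_B\rangle = \Xi(\gamma)$. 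Assembling these pieces gives $\opnorm \ungra_{C_n}\opnorm_{op} \leq |C_n|\cdot \frac{P(n)}{\sqrt{|C_n|}}\cdot M = M\,P(n)\sqrt{|C_n|}$, which is exactly \eqref{eq:annulus} with $P_1 = M\cdot P$, and finishes the proof after the Cauchy--Schwarz summation over annuli. The main difficulty I anticipate is making the representation-comparison step rigorous, in particular verifying that the weighted sum $\sum_\gamma b_\gamma \lambda(\gamma)$ is genuinely controlled by the $\pi$-side average appearing in hypothesis (2), rather than merely by a formal analogy.
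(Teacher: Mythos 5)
Your proposal follows essentially the same route as the paper: reduce to the annulus indicators $\ungra_{C_n}$ via the triangle inequality and a Cauchy--Schwarz summation, dominate the regular representation by the Koopman representation using positivity of the vector $\ungra_B$ (the paper cites Shalom's Lemma~2.3 for exactly this), and then factor out $\sup_{\gamma\in C_n}\Xi(\gamma)$ from the weighted average by a monotonicity-for-nonnegative-coefficients argument before applying hypotheses (1) and (2). The only (immaterial) difference is the order of the two middle steps --- the paper passes to $\pi$ first and factors out $\sup\Xi$ inside the Koopman representation, proving the monotonicity by splitting vectors into positive and negative real and imaginary parts --- and your final arithmetic $|C_n|\cdot P(n)|C_n|^{-1/2}\cdot M$ matches the paper's conclusion.
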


We will refer to the two hypotheses of the criterion as the \textbf{Harish-Chandra volume estimates condition} and the \textbf{uniform boundedness condition}. This dynamical criterion can be applied in a certain variety of cases, including irreducible lattices in semisimple Lie groups with finite center endowed with a length function defined with the help of a Finsler metric; see the comments section for a discussion about the Harish-Chandra estimates.

\subsubsection{Comments on the Harish-Chandra volume estimates condition}

\label{subsub: comments Harish-Chandra}

The Harish-Chandra estimates have been studied in the general setting of semisimple Lie groups, for which one can find an estimation that is, in some sense, sharp. Precisely, consider a connected semisimple Lie group $G$, with no compact factors and a finite center. Let $\mathfrak{g}$ be the Lie algebra of $G$. Let $\mathfrak{a}$ be a maximal abelian subalgebra of $\mathfrak{g}$. Let $\Sigma$ be the root system of $(\mathfrak{g},\mathfrak{a})$, let $\Sigma^+$ be a system of positive roots, let $\mathfrak{a}^+$ be the corresponding Weyl chamber, let $\Sigma^+_0$ be the set of indivisible positive roots. Let $G := KAN$ be the corresponding Iwasawa decomposition, let $M := K \cap Z(A)$, and let $P := MAN$ be the corresponding parabolic subgroup. Let $\rho$ be the half-sum of positive roots. Let us denote $A^+ := exp(\mathfrak{a}^+)$.

Finally, consider the action $G \curvearrowright G/P$, a quasi-invariant measure $\nu$ on $G/P$ and $\Xi$ the associated Harish-Chandra function. We have the following form of the Harish-Chandra estimate, due to Anker (see \cite{ANKER}):

for every $g \in G$, and any $KA^+K$-decomposition $g = k_1 \exp(H) k_2$, we have \[\Xi(g) \asymp \prod_{\alpha \in \Sigma^+_0} \left(1 + \alpha(H)\right)e^{-\rho(H)}.\]If $H \in \mathfrak{a}^+$, let us denote \[\Vert H \Vert := 2\rho(H).\]Let us endow $G$ with the following so-called Finsler length function: if $g = k_1\exp(H)k_2$ is a $KA^+K$ decomposition of any $g$ in $G$, let us denote $L(g) := \Vert H \Vert$. We then have the following estimate: \[\forall g \in G,\quad \Xi(g) \ll \left(1+L(g)\right)^{\vert \Sigma^+_0\vert}e^{-\frac{L(g)}{2}}.\]

Moreover, if we assume that $\Gamma$ is irreducible, we have the following estimate (by combining Prop 7.2 and 7.3 in \cite[p. 25]{ALBU}):

\[\vert C^\Gamma_n \vert \ll n^{\rk G - 1} e^n.\]

Therefore, we have \[\sup_{\gamma \in C^\Gamma_n} \Xi(\gamma) \sqrt{\vert C^\Gamma_n \vert} \ll (1+n)^{\Vert \Sigma^+_0 \vert \frac{(\rk G - 1)}{2}},\]

so that condition $(1)$ is satisfied.

\subsubsection{Comments on the uniform boundedness condition}

\label{subsub: comments uniform}

The following theorem, which generalizes ideas from \cite{BLP}, gives sufficient conditions for the uniform boundedness condition to hold.

\begin{thm}[Sufficient conditions for the uniform boundedness condition]\label{theorem from continuous to discrete} 

Let $G$ be a locally compact group endowed with a left Haar measure $\mu_G$, and a proper length function $\cali{L}$. Let $B$ be a compact space, $\nu$ be a Borel probability measure on $B$ and consider a measurable action of $G$ on $B$ leaving $\nu$ quasi-invariant. Let us assume that
\begin{itemize}[label=\textbullet]
\item the Radon-Nikodym cocycle and the Harish-Chandra function are continuous ;
\item the compact subgroup $G_0$ of $G$ consisting of elements of length $0$ is open and acts transitively on $B$ and leaves $\nu$ invariant.
\end{itemize}

Let $\Lambda$ be a lattice in $G$, satisfying the following growth assumption: if $L$ denotes the restriction of $\cali{L}$ to $\Lambda$, then there is $c \in \rr$ such that for all sufficiently large $n$, \[\mu_G(C^G_n) \leq c \vert C^\Lambda_n \vert.\]

Let us finally denote by $\pi$ the Koopman representation of $\Lambda$ associated with the (restricted) action $\Lambda \curvearrowright B$. Then there is $M \in \rr$ such that \[\forall n \in \nn, \left\Vert \fra{1}{\vert C^\Lambda_n\vert} \sum_{\gamma \in C^\Lambda_n} \fra{\pi(\gamma)}{\Xi(\gamma)} \right\Vert_{2\to 2} \leq M.\]

\end{thm}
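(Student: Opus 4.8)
The plan is to transport to $\Lambda$ an operator-norm estimate that holds, for structural reasons, on the ambient group $G$. Write $T_n:=\frac{1}{\vert C^\Lambda_n\vert}\sum_{\gamma\in C^\Lambda_n}\frac{\pi(\gamma)}{\Xi(\gamma)}$ and keep the letter $\pi$ for the Koopman representation of all of $G$ on $L^2(B,\nu)$, so that the operators $\pi(\gamma)$, $\gamma\in\Lambda$, are its restrictions. First I would record three elementary facts. Since $G_0=\{g:\cali{L}(g)=0\}$ and $\cali{L}$ is symmetric and subadditive, $\cali{L}$ is bi-$G_0$-invariant; hence so are the sets $C^G_n$ and, because $G_0$ preserves $\nu$ and therefore fixes $\ungra_B$, so is $\Xi$. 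Transitivity of the $\nu$-preserving action $G_0\curvearrowright B$ forces $L^2(B,\nu)^{G_0}=\cc\,\ungra_B$. Finally, for $\phi\geq 0$ the operator $\pi(\phi):=\int_G\phi(g)\pi(g)\,d\mu_G(g)$ is positivity-preserving, each $\pi(g)$ being so.

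These facts make the continuous model rank one. If $\phi\geq 0$ is bi-$G_0$-invariant, then $\pi(k)\pi(\phi)=\pi(\phi)=\pi(\phi)\pi(k)$ for all $k\in G_0$, so the range of $\pi(\phi)$ lies in $\cc\,\ungra_B$ while $\pi(\phi)$ kills $(\cc\,\ungra_B)^\perp$; thus $\pi(\phi)=\langle\pi(\phi)\ungra_B,\ungra_B\rangle\,E$, where $E$ is the projection onto the constants, and its norm equals $\vert\int_G\phi\,\Xi\,d\mu_G\vert$. Taking $\phi_n:=\frac{1}{\mu_G(C^G_n)}\frac{1}{\Xi}\ungra_{C^G_n}$ shows that the continuous annular average $\frac{1}{\mu_G(C^G_n)}\int_{C^G_n}\frac{\pi(g)}{\Xi(g)}\,d\mu_G(g)$ is exactly $E$, of norm $1$. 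The discretization then enters through the growth hypothesis. Fixing a small open compact subgroup $U\subseteq G_0$ (available since $G$ is totally disconnected) and writing $P_U$ for the projection onto $U$-invariants, one has $T_nP_U=\pi(\rho_n)$ with $\rho_n=\frac{1}{\vert C^\Lambda_n\vert\,\mu_G(U)}\sum_{\gamma}\frac{1}{\Xi(\gamma)}\ungra_{\gamma U}$. Since $\gamma U\subseteq G_0\gamma G_0\subseteq C^G_n$, on which $\Xi\equiv\Xi(\gamma)$, and the sets $\gamma U$ overlap with multiplicity at most $\vert\Lambda\cap G_0\vert$, the density $\rho_n$ is dominated by the bi-$G_0$-invariant density $\tilde\rho_n:=\frac{\vert\Lambda\cap G_0\vert}{\vert C^\Lambda_n\vert\,\mu_G(U)}\frac1\Xi\ungra_{C^G_n}$; positivity-preservation promotes this pointwise domination to $\Vert T_nP_U\Vert_{2\to 2}\leq\Vert\pi(\tilde\rho_n)\Vert_{2\to 2}$, and the rank-one formula together with $\mu_G(C^G_n)\leq c\,\vert C^\Lambda_n\vert$ bounds the right-hand side uniformly in $n$. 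Continuity of the cocycle and of $\Xi$ is what guarantees the regularity of the integrand needed for this comparison.

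The main obstacle is that this only controls $T_nP_U$, which is a priori smaller than $T_n$: smearing a Dirac mass over $U$ amounts to composing with a contraction, so no amount of smearing can recover $\Vert T_n\Vert_{2\to 2}$, and a point mass cannot be dominated by an atomless bi-$G_0$-invariant density. The genuine content is therefore the cancellation among the $\vert C^\Lambda_n\vert$ unitaries $\pi(\gamma)$, which the rank-one argument by itself discards. I expect to extract it from the Harish-Chandra behaviour of the matrix coefficients of $\pi$ — concretely, from $\langle\pi(g)\ungra_B,\ungra_B\rangle=\Xi(g)$ and the reduction of the $G_0$-invariants to the constants — interfaced with the precise tuning of the weights $1/\Xi(\gamma)$ against the count, which is exactly what makes $\frac{1}{\vert C^\Lambda_n\vert}\sum_{\gamma\in C^\Lambda_n}\frac{\Xi(\gamma)}{\Xi(\gamma)}=1$; the growth assumption is then the device that converts this normalisation, valid for the continuous integral, into the uniform discrete bound $M$.
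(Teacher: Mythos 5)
Your proposal correctly reconstructs the two ``easy'' halves of the argument: the structural fact that the bi-$G_0$-invariant continuous average $M^G_n:=\frac{1}{\mu_G(C^G_n)}\int_{C^G_n}\frac{\pi(g)}{\Xi(g)}\,\diff\mu_G(g)$ is the rank-one projection onto the constants (the paper only records the weaker statement $M^G_n\ungra_B=\ungra_B$, via a Fubini computation, but your $P_U$-commutation argument is fine and slightly sharper), and the discrete-to-continuous comparison obtained by smearing each $\delta_\gamma$ over a small neighbourhood, using bi-$G_0$-invariance of $\Xi$ and the growth hypothesis $\mu_G(C^G_n)\leq c\,\vert C^\Lambda_n\vert$. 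But you then stop at a bound on $T_nP_U$, declare that ``no amount of smearing can recover $\Vert T_n\Vert_{2\to 2}$'', and appeal to an unspecified ``cancellation among the unitaries''. That last paragraph is not a proof step; it is an admission that the argument is not closed, and the cancellation you are looking for does not exist and is not needed.

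The missing idea is an interpolation step, and it is the whole point of the theorem's proof. Since $C^\Lambda_n$ is symmetric and $\Xi(\gamma^{-1})=\Xi(\gamma)$, the operator $T_n$ is self-adjoint on $L^2(B,\nu)$, and it is positivity-preserving. A Riesz--Thorin argument (self-adjointness gives $\Vert T_n\Vert_{1\to 1}=\Vert T_n\Vert_{\infty\to\infty}$ by duality, and interpolation then gives $\Vert T_n\Vert_{2\to 2}\leq\Vert T_n\Vert_{1\to1}^{1/2}\Vert T_n\Vert_{\infty\to\infty}^{1/2}$) yields $\Vert T_n\Vert_{2\to 2}\leq\Vert T_n\Vert_{\infty\to\infty}$; and because $T_n$ is positivity-preserving, $\Vert T_n\Vert_{\infty\to\infty}=\Vert T_n\ungra_B\Vert_\infty$, since $-\Vert h\Vert_\infty\ungra_B\leq h\leq\Vert h\Vert_\infty\ungra_B$ forces $\vert T_nh\vert\leq\Vert h\Vert_\infty\,T_n\ungra_B$. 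Your smearing estimate, applied to the single vector $\ungra_B$ (note $P_U\ungra_B=\ungra_B$), gives exactly the pointwise bound $0\leq T_n\ungra_B\leq C\,M^G_n\ungra_B=C\ungra_B$ with $C$ independent of $n$, so $\Vert T_n\ungra_B\Vert_\infty\leq C$ and the chain $\Vert T_n\Vert_{2\to2}\leq\Vert T_n\Vert_{\infty\to\infty}=\Vert T_n\ungra_B\Vert_\infty\leq C$ closes the proof. In other words, all your ingredients are the right ones, but you are trying to dominate $T_n$ as an $L^2$-operator, whereas the correct move is to dominate it as an $L^\infty$-operator on the positive cone and then interpolate back to $L^2$. (A minor further quibble: $G$ need not be totally disconnected; you only need one compact open subgroup inside $G_0$, and $G_0$ itself, which is compact and open by hypothesis, already serves.)
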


The uniform boundedness condition has been studied in \cite{BAMU}, \cite{BOYER}, \cite{GARNBOUND}, \cite{BLP} and \cite{BOYPIN} where authors investigate generalizations of the von Neumann ergodic theorem to the situation where the measure is only quasi-invariant. In several cases of interest, the relevant generalization of von Neumann means are the normalized means \[\fra{1}{\vert C_n \vert} \sum_{\gamma \in C_n} \frac{\pi(\gamma)}{\Xi(\gamma)}\]which are shown to converge, in the weak-operator topology, to the orthogonal projector on the constant functions subspace, using the fact that the sequence of these means is uniformly operator-norm-bounded. In particular, the following are true:

\begin{facts} The uniform boundedness condition is true for $(\Lambda,L,B,\nu)$ in the following situations:
\begin{itemize}
\item \cite{BAMU} if $\Lambda$ is the fundamental group of a compact, negatively curved manifold $X$ with universal cover $\tilde{X}$, such that $L$ is the length function associated to the action of $\Gamma$ on $\tilde{X}$ and a fixed base-point $x_0 \in \tilde{X}$, and $B$ is the Gromov boundary of $\tilde{X}$ endowed with the Patterson-Sullivan measure $\nu$ associated to $x_0$;
\item \cite{BOYPIN} if $\Lambda$ is a free group over a finite set of generators, where $L$ is the length function associated to the action of $\Lambda$ on its Cayley tree (with respect to a generating basis) $\tilde{X}$ and $B$ is the Gromov boundary of $\tilde{X}$ and $\nu$ is the Patterson-Sullivan measure;
\item \cite{BOYER} if $\Lambda$ is a convex cocompact discrete group of isometries of a $\cat$-space $X$ with non-arithmetic spectrum and a finite BMS measure, and $L$ is the length function associated to the action and $B$ is the Gromov boundary and $\nu$ is the Patterson-Sullivan probability measure;
\item \cite{BLP} if $G$ is a noncompact, connected, semisimple Lie group with finite center, let us keep the notation from section \ref{subsub: comments Harish-Chandra}: $\Lambda$ is a lattice in $G$, $B := G/P$, $\nu$ is a quasi-invariant probability measure and $L$ is defined by the formula \[\forall k_1,k_2 \in K,\ \forall H \in \mathfrak{a}^+,\quad L(k_1exp(H)k_2) := \Vert H \Vert\]where $\Vert \cdot \Vert$ is a norm induced from a scalar product on $\mathfrak{a}$. Moreover, we know from \cite{GORNEV} (see \cite[Théorème 1.4.41]{APL} for details) that if $\Lambda$ is irreducible, the growth assumption of Theorem \ref{theorem from continuous to discrete} is true for every length integer-valued proper length function $L$ on $G$, up to a rescaling, i.e. replacing $L$ by $L' := \lceil \frac{L}{\alpha} \rceil$ for $\alpha$ large enough so that the elements in $G$ of length $\leq \alpha$ generate $G$. Therefore, the uniform boundedness condition is true for $(\Lambda,L,B,\nu)$ if $L$ is a suitable rescaling of the length function defined in section \ref{subsub: comments Harish-Chandra} with the help of a Finsler metric.
\end{itemize}
\end{facts}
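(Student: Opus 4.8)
The statement is a compendium: each bulleted situation is covered by an existing result, and the work is to verify that the cited theorem really yields condition~(2) of Theorem~\ref{shalomtrick} in the present normalization, rather than some close variant. I would organize the verification around the single structural fact common to all four references. In each of them one proves a von~Neumann-type mean ergodic theorem, namely that the normalized shell averages
\[T_n := \fra{1}{\vert C_n\vert}\sum_{\gamma\in C_n}\fra{\pi(\gamma)}{\Xi(\gamma)}\]
converge in the weak operator topology to the orthogonal projection onto the line $\cc\ungra_B\subset L^2(B,\nu)$. For such a statement the standard scheme is to first secure the uniform operator-norm bound $\sup_n\Vert T_n\Vert_{2\to2}<\infty$ and then check convergence on a dense set; thus the uniform boundedness condition is not a corollary of these ergodic theorems but the technical lemma they rest on, and extracting it amounts to locating that lemma and matching conventions. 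I would also record at the outset that, since $L$ is symmetric and the Harish-Chandra function satisfies $\Xi(\gamma^{-1})=\Xi(\gamma)$, each $T_n$ is self-adjoint, which simplifies the norm estimates.

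For the three hyperbolic situations (i)--(iii) I would present a common mechanism. In each case $B$ is a Gromov boundary, $\nu$ a Patterson-Sullivan (conformal) measure, and the Radon-Nikodym cocycle $c(\gamma^{-1},\cdot)$ is governed by Busemann functions; the point is a property-RD estimate for the boundary representation, i.e. a domination of the matrix coefficients of $\pi$ by $\Xi$ at the exact conformal rate (not merely up to a spectral gap), with polynomial-in-length and smoothness corrections. It is precisely this sharp rate that makes the renormalized operators $\pi(\gamma)/\Xi(\gamma)$ bounded on average rather than exponentially large. The model computation, using $\langle\pi(\gamma^{-1}\gamma')\ungra_B,\ungra_B\rangle=\Xi(\gamma^{-1}\gamma')$, is
\[\Vert T_n\ungra_B\Vert^2=\langle T_n^{*}T_n\ungra_B,\ungra_B\rangle=\fra{1}{\vert C_n\vert^2}\sum_{\gamma,\gamma'\in C_n}\fra{\Xi(\gamma^{-1}\gamma')}{\Xi(\gamma)\Xi(\gamma')}\ll 1,\]
the boundedness of the right-hand side being the conformal/combinatorial heart of the matter; the remaining $G_0$-isotypic components of $L^2(B,\nu)$ are handled by the same property-RD estimate. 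Concretely, case (i) is the theorem of \cite{BAMU} for fundamental groups of compact negatively curved manifolds; case (ii) is its specialization to free groups on Cayley trees in \cite{BOYPIN}; and case (iii) is the general convex cocompact $\cat$ theorem of \cite{BOYER}, in which the non-arithmeticity of the length spectrum and the finiteness of the Bowen-Margulis-Sullivan measure are exactly the hypotheses delivering the mixing and the conformal estimate above.

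For case (iv) I would argue in two steps. First, the uniform bound over the ambient group $G$ is the content of \cite{BLP}: here $\Xi$ is the genuine Harish-Chandra spherical function of $G$, its Anker asymptotic is the one recalled in Section~\ref{subsub: comments Harish-Chandra}, and the analogue of the displayed estimate is controlled by the classical spherical-function calculus on $G/P$. Second, I would descend from $G$ to the lattice $\Lambda$ by invoking Theorem~\ref{theorem from continuous to discrete}: its two hypotheses (the length-zero subgroup $G_0$ open, transitive on $G/P$ and $\nu$-preserving; continuity of the cocycle and of $\Xi$) hold in this setting, and its growth assumption $\mu_G(C^G_n)\le c\,\vert C^\Lambda_n\vert$ is supplied, for irreducible $\Lambda$, by \cite{GORNEV} after the rescaling $L':=\lceil L/\alpha\rceil$ indicated in the statement. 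The conclusion of that theorem is verbatim the uniform boundedness condition for $(\Lambda,L',B,\nu)$.

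The main obstacle I anticipate is bookkeeping rather than analysis: the cited sources use incompatible normalizations. The shell $C_n$ is sometimes $\{\gamma:L(\gamma)=n\}$ (trees, manifolds) and here $\{\gamma:L(\gamma)\in[n,n+1)\}$; the representative of the measure class of $\nu$, and hence the cocycle $c$, differs between sources; and $\Xi$ may appear either as an exact spherical coefficient or through an Anker-type two-sided estimate. The bulk of the verification therefore consists in checking that replacing $\Xi$ by a comparable function and $\{L=n\}$ by $\{L\in[n,n+1)\}$ alters each $T_n$ only by a factor bounded uniformly in $n$, so that the uniform bound is preserved; the quasi-invariance of $\nu$ under the transitive compact group $G_0$ and the comparison between $\Xi$ and the shell volumes furnished by condition~(1) are the tools that make these changes of normalization harmless.
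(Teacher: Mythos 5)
This \emph{Facts} environment is not proved in the paper at all --- it is a compendium of citations --- and your proposal correctly matches each bullet to its source and, for the fourth bullet, correctly reconstructs the only genuine reasoning the paper supplies there, namely combining Theorem \ref{theorem from continuous to discrete} with the growth estimate from \cite{GORNEV} after the rescaling $L':=\lceil L/\alpha\rceil$. One small caveat: your sketched mechanism for \cite{BAMU}, \cite{BOYPIN}, \cite{BOYER} via the $T_n^{*}T_n$ matrix-coefficient sum $\fra{1}{\vert C_n\vert^2}\sum_{\gamma,\gamma'}\Xi(\gamma^{-1}\gamma')/(\Xi(\gamma)\Xi(\gamma'))$ is a plausible gloss but not what those papers (or the paper's own Theorem \ref{theorem from continuous to discrete}) actually do --- they use an $L^1$--$L^\infty$ interpolation together with a pointwise (shadow-lemma type) estimate of $T_n\ungra_B$ --- which is immaterial here since the paper only cites these results rather than reproving them.
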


\subsection{A group having RRD but not having RD}

In this section, we apply the previous results to a specific example and exhibit a group having RRD but not having RD for a natural length function.

\subsubsection{The group $\sln_2(A)$ and its action on the product of two trees}

\label{defi Gamma G}

Let $q$ be a power of a prime number and $A := \ff_q[X,X^{-1}]$ be the commutative ring of Laurent polynomials in the variable $X$ with coefficients in $\ff_q$, the field with $q$ elements. It is the subring of $\kk := \ff_q(X)$ (the ring of rational fractions over the field $\ff_q$) generated by the elements $X$ and $X^{-1}$ and its additive group is the $\ff_q$-vector space with basis $\{X^n \tq n \in \zz\}$, so each element of $A$ can be written as $\sum_{n \in \zz} a_n X^n$, such that $\forall n \in \zz$, $a_n \in \ff_q$ and all but a finite number of the $a_n$ being zero.

Let us now define on $\kk$ two valuations: if $F \in \kk$, there are $n \in \zz$, $P,Q \in \ff_q[X]$ such that $F := X^n P/Q$ with $X \nmid P$ and $X \nmid Q$. The integer $n$ only depends on $F$ and is denoted by $v_{0} (F)$ (it is the valuation at the place $0$). If $F = P/Q \in \kk$, we define $v_{\infty} (F)$ to be the number $\deg P - \deg Q$ (it is the valuation at infinity). We denote, for $i \in \{0,\infty\}$, $\vert \cdot \vert_i := q^{-v_i(\cdot )}$ the associated norms, and we consider the corresponding completions $\mathbb{K}_0$ and $\mathbb{K}_\infty$. The elements of the ring $A$ are called the $\{0,\infty\}$-integral elements of $\mathbb{K}$. Now consider the diagonal embedding $\Delta : \sln_2(A) \hookrightarrow \sln_2(\kk_0) \times \sln_2(\kk_\infty)$. According to \cite[p. 1]{MARGU}, \[\Gamma := \Delta(\sln_2(A)) \subset G := \sln_2(\kk_0) \times \sln_2(\kk_\infty)\] is a lattice in $G$.

Now, both $\sln_2(\mathbb{K}_0)$ and $\sln_2(\mathbb{K}_\infty)$ act simplicially and properly on their Bruhat-Tits trees $T_0$ and $T_\infty$ (see \cite[p. 69]{SERRE} for the detailed construction of the tree and the action on it) which both happen to be $(q+1)$-regular trees, so the group $G = \sln_2(\mathbb{K}_0) \times \sln_2(\mathbb{K}_\infty)$ (and therefore, $\Gamma$ !) acts cellularly on the product $\mathbb{I} := T_0 \times T_\infty$. Let us denote $V(T_0)$ and $V(T_\infty)$ the sets of vertices of these two trees. Let us denote by $d_0$ and $d_\infty$ the distances giving length $1$ to the edges of $T_0$ and $T_\infty$. We define on $\mathbb{I} := T_0 \times T_\infty$, the so-called $L^1$ distance, that is, $$\forall x,y \in T_0,\ \ \forall x',y' \in T_\infty,\ \ d_\ii\left((x,y),(x',y')\right) := d_0(x,y) + d_\infty(x',y').$$

For this distance function, the group $G$ (and, in particular, $\Gamma$) acts on $\mathbb{I}$ (and also on $V(T_0) \times V(T_\infty)$) by isometries. 

Let $(v_0,v_\infty) \in V(T_0) \times V(T_\infty)$. We define two length functions as follows: for $i \in \{0,\infty\}$, let us denote \[\forall g_i \in G_i,\quad L_i(g_i) := d_i(v_i,g_iv_i);\] finally, we define a length function on $G$ as follows: $\forall g := (g_0,g_\infty) \in G$, \[L(g) := d_\ii\left((v_0,v_\infty),\gamma (v_0,v_\infty)\right) = L_0(g_0) + L_\infty(g_\infty).\] 

According to \cite{LUBMOZRAG}, the length function $L$ on $\Gamma$ is quasi-isometric to any of the word-lengths on $\Gamma$.

We perform the necessary computations and apply the criterion to $\Gamma$ and the action of $\Gamma$ on $B$, the product of the boundaries of the Bruhat-Tits trees to deduce the following corollary.

\begin{coro}\label{coro un} The group $\sln_2(A)$ has RRD with respect to $L$.
\end{coro}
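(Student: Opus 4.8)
The plan is to verify the two hypotheses of the dynamical criterion (Theorem~\ref{shalomtrick}) for the lattice $\Gamma = \Delta(\sln_2(A))$ acting on $B := \partial T_0 \times \partial T_\infty$, the product of the boundaries of the two Bruhat-Tits trees, with the quasi-invariant measure $\nu$ coming from the product of the Patterson-Sullivan measures. The uniform boundedness condition should follow from Theorem~\ref{theorem from continuous to discrete} applied to the ambient locally compact group $G = \sln_2(\kk_0) \times \sln_2(\kk_\infty)$: here $G_0 = G_e$ is the (compact, open) stabilizer of the base vertex $(v_0,v_\infty)$, it acts transitively on $B$ and preserves $\nu$, and the Radon-Nikodym cocycle and Harish-Chandra function of a rank-one $p$-adic group are known to be continuous. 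The only genuine input to check for this half is the growth assumption $\mu_G(C^G_n) \leq c\,\vert C^\Gamma_n\vert$, which I would extract from the lattice property and the product structure, as discussed in the Facts.

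The substantive work is establishing the Harish-Chandra volume estimates condition~(1). Because $B$, $\nu$, and $L$ all split as products indexed by $\{0,\infty\}$, the Koopman representation is a tensor product and the Harish-Chandra function factorizes: writing $\gamma = (\gamma_0,\gamma_\infty)$ one has $\Xi(\gamma) = \Xi_0(\gamma_0)\,\Xi_\infty(\gamma_\infty)$, where each $\Xi_i$ is the Harish-Chandra function for the action of $\sln_2(\kk_i)$ on the boundary of a $(q+1)$-regular tree. First I would record the classical rank-one estimate
\[
\Xi_i(\gamma_i) \asymp \bigl(1 + L_i(\gamma_i)\bigr)\, q^{-L_i(\gamma_i)/2},
\]
valid for a $(q+1)$-regular tree, and combine the two factors. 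Since $L(\gamma) = L_0(\gamma_0) + L_\infty(\gamma_\infty)$, for $\gamma \in C_n^\Gamma$ (so $L(\gamma) \in [n,n+1)$) this yields, after bounding each $(1+L_i)$ factor by $(1+n)$,
\[
\sup_{\gamma \in C_n^\Gamma} \Xi(\gamma) \ll (1+n)^2\, q^{-n/2}.
\]

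It then remains to match this against $\sqrt{\vert C_n^\Gamma\vert}$, i.e.\ to show $\sup_{\gamma\in C_n}\Xi(\gamma)\sqrt{\vert C_n^\Gamma\vert}$ grows at most polynomially. For this I need a volume estimate of the shape $\vert C_n^\Gamma\vert \ll Q(n)\, q^{n}$ for some polynomial $Q$. I would obtain this by counting lattice points: using the fact that $\Gamma$ is a lattice in $G$ and that $G_0$ is open, $\vert C_n^\Gamma\vert$ is comparable to $\mu_G(C_n^G)$, and the latter is estimated from the product structure of the two trees, where the sphere of combinatorial radius $r$ in a $(q+1)$-regular tree has cardinality $\asymp q^{r}$. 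Summing $q^{r_0}q^{r_\infty}$ over the lattice points $r_0 + r_\infty \in [n,n+1)$ produces a factor $q^{n}$ times a polynomial in $n$ counting the decompositions, giving $\vert C_n^\Gamma\vert \ll (1+n)\,q^{n}$. Plugging in, $\sup_{\gamma\in C_n}\Xi(\gamma)\sqrt{\vert C_n^\Gamma\vert} \ll (1+n)^{5/2}$, which is polynomial, so condition~(1) holds. The main obstacle is getting the counting for condition~(1) exactly right: the product length function couples the two radial coordinates through the constraint $r_0 + r_\infty \in [n,n+1)$, so I must be careful that the combined Harish-Chandra decay $q^{-(r_0+r_\infty)/2}$ and the sphere-counting growth $q^{r_0+r_\infty}$ cancel uniformly over the anti-diagonal, leaving only the polynomial slack; once both hypotheses are verified, Theorem~\ref{shalomtrick} gives RRD and hence Corollary~\ref{coro un}.
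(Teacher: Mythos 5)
Your proposal follows essentially the same route as the paper: the same boundary action $\Gamma \curvearrowright \partial T_0 \times \partial T_\infty$, the uniform boundedness condition obtained from Theorem~\ref{theorem from continuous to discrete} (with the growth comparison $\mu_G(C^G_n) \leq c\,\vert C^\Gamma_n\vert$ as the one nontrivial input, which the paper gets from the mean ergodic theorem via Gorodnik--Nevo), and condition~(1) from the factorized tree Harish-Chandra estimate $\Xi_i \asymp (1+L_i)q^{-L_i/2}$ together with the ball count $\vert C^\Gamma_n\vert \ll n\,q^n$ in the product of trees, yielding the same final bound $n^{5/2}$. This matches Propositions~\ref{lemma: HC}, \ref{lemma: ball counting}, \ref{growthestimates} and \ref{first ingredient shalom trick} in the paper, so the argument is correct as sketched.
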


\subsubsection{Two general consequences}

As mentioned in the introduction, one way to prove that a discrete group does not have RD is to prove that it contains an amenable subgroup of exponential growth, since an open subgroup of a group having RD has it as well, and since a finitely-generated amenable group has RD with respect to any of its word-lengths if and only if it is of polynomial growth. However, the example of $\sln_2(A)$ shows that the analogue obstruction for RRD does not hold, by noticing that $\sln_2(A)$ contains a lamplighter group as a subgroup. These observations are the subject of the following corollary.

\begin{coro}\label{coro trois}
There exists finitely generated groups endowed with lengths functions which are quasi-isometric to word-lengths
\begin{enumerate}
\item which have RRD but do not have RD;
\item which contain subgroups that do not have RRD for some word-lengths.
\end{enumerate}
\end{coro}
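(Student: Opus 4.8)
The plan is to prove Corollary~\ref{coro trois} by exhibiting the group $\Gamma := \sln_2(A)$, already studied in Corollary~\ref{coro un}, as an explicit witness for both assertions. For part~(1), I would take $\Gamma$ endowed with the length function $L$ coming from its action on the product of trees $\ii = T_0 \times T_\infty$. Corollary~\ref{coro un} already gives that $\Gamma$ has RRD with respect to $L$, and the paragraph preceding the statement recalls that, by \cite{LUBMOZRAG}, $L$ is quasi-isometric to any word-length on $\Gamma$; since $\Gamma$ is a lattice in $G$ acting cocompactly it is finitely generated, so the first half of~(1) is immediate. The remaining task is to show that $\Gamma$ does \emph{not} have RD.

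First I would exhibit a lamplighter subgroup inside $\Gamma$. The natural candidate is the subgroup of upper unipotent matrices $\left(\begin{smallmatrix} 1 & a \\ 0 & 1\end{smallmatrix}\right)$ with $a \in A = \ff_q[X,X^{-1}]$, together with the diagonal element $\left(\begin{smallmatrix} X & 0 \\ 0 & X^{-1}\end{smallmatrix}\right)$ that shifts the variable $X$; conjugating the unipotent by this diagonal element multiplies the entry by $X^2$, so one obtains a copy of $\ff_q \wr \zz$ (or a finite-index relative thereof), which is an amenable group of exponential growth. I would then invoke the two classical facts quoted in the excerpt's discussion: a finitely generated amenable group has RD with respect to a word-length if and only if it has polynomial growth, and RD passes to open (in the discrete case, arbitrary) subgroups. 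Since the lamplighter has exponential growth it fails RD, hence $\Gamma$ cannot have RD either, establishing~(1).

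For part~(2), the same lamplighter subgroup $H \leq \Gamma$ serves as the witness. An amenable group has RRD if and only if it has RD (for amenable groups the radial RD inequality over all radial functions already forces the full inequality, essentially because on an amenable group the reduced and full norms coincide and radial averaging suffices to detect growth), so the exponentially-growing lamplighter $H$ fails RRD for its word-length. I would phrase this carefully: it suffices to note that a finitely generated amenable group of super-polynomial growth has neither RD nor RRD with respect to any word-length, which is exactly the content we need, and cite the relevant results on amenable groups and RRD. Thus $\Gamma$ contains a subgroup $H$ that does not have RRD for some word-length, which is assertion~(2).

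The main obstacle I anticipate is the clean identification of the lamplighter subgroup and the verification that the two invocations are correctly applicable: specifically, checking that RD genuinely descends to the subgroup $H$ with its \emph{own} word-length (not merely the restriction of $L$), and establishing the precise statement that a finitely generated amenable group fails RRD once its growth is super-polynomial. The first point is handled by the standard inheritance of RD under open subgroups combined with the quasi-isometry invariance of word-lengths on a finitely generated group; the second requires recalling that for amenable groups RRD coincides with RD, which is the genuinely substantive input and the step I would state most carefully.
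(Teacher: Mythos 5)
Your proposal is correct and follows essentially the same route as the paper: the same lamplighter subgroup $H$ of upper-triangular matrices, the same use of RD's inheritance by subgroups plus the fact that a finitely generated amenable group with RRD (hence in particular with RD) for a word-length must have polynomial growth. One small caution: your parenthetical justification that for amenable groups ``the radial RD inequality already forces the full inequality'' is loose as stated --- the actual mechanism is that the radial functions $\ungra_{C_n}$ alone detect growth (since $\opnorm \ungra_{C_n} \opnorm_{op} = \vert C_n\vert$ by amenability, RRD forces $\vert C_n \vert \leq P(n)^2$), which is exactly the paper's Lemma~(3) and the precise statement you correctly fall back on at the end.
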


\begin{rema} No non-uniform lattice in a semisimple Lie group of rank at least two has RD, since they have $U$-elements \cite{LUBMOZRAG}. As explained in Sections \ref{subsub: comments Harish-Chandra} and \ref{subsub: comments uniform}, any irreducible lattice of a semisimple Lie group $G$ of finite center, endowed with a rescaled Finsler length function has RRD. Therefore, any irreducible, non-uniform lattice in a semisimple Lie group with finite center of rank at least two has property $(1)$ and $(2)$ of Corollary \ref{coro trois}.

In this paper, we prove $(1)$ and $(2)$ in the case of $\sln_2(\ff_q[X,X^{-1}])$, by means of elementary computations.
\end{rema}

\subsection{Structure of the paper}
We prove the dynamical criterion (Theorem \ref{shalomtrick}) in Section \ref{proof criterion}. We prove Theorem \ref{theorem from continuous to discrete} in Section \ref{section proof theorem continuous discrete}, and we prove in Section \ref{radialrd} the fact that $\Gamma$ has RRD (Corollary \ref{coro un}) in which we prove that we can apply the dynamical criterion to $\Gamma$. In Section \ref{nord}, we recall some facts on the RD property, exhibit the lamplighter subgroup $H$ of $\Gamma$ and prove that $\Gamma$ does not have RD and that $H$ does not have RRD (Corollary \ref{coro trois}).

\section{Proof of the criterion (Theorem \ref{shalomtrick})}\label{proof criterion}

Let $\Lambda$ be a discrete group, and $L$ be a proper integer-valued length function on $\Lambda$. Let us define \[\forall \gamma \in \Lambda,\ \ \ungra_{n}(\gamma) := \left\{\begin{array}{rl}
1 &\mbox{if } L(\gamma) = n\\
0 &\mbox{else}\\
\end{array}\right.\]The following proposition shows that for integer-valued length functions, rapid decay on spheres implies radial rapid decay.

\begin{prop}\label{integerlength} Let $\Gamma$ be a discrete group, and $L$ an integer-valued proper length function on $\Gamma$.  Then if $$\exists P \in \rr[X],\ \ \forall n \in \nn,\ \ \opnorm \ungra_n \opnorm_{op} \leq P(n) \Vert \ungra_n \Vert_2,$$ then $\Gamma$ has RRD with respect to $L$.
\end{prop}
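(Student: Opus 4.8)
The plan is to decompose any radial function into its restrictions to the spheres $C_n = \{\gamma \tq L(\gamma) = n\}$, apply the sphere-wise hypothesis term by term, and then recombine the resulting sum via the Cauchy--Schwarz inequality so that the polynomial weights factor out cleanly and the $L^2$ norm reappears. The whole argument hinges on choosing the Cauchy--Schwarz pairing correctly.

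First I would observe that, since $L$ is integer-valued and proper, every $f \in \cc[\Gamma]^{rad}$ with $L(f) = N$ is constant on each sphere $C_n$ and supported on $\bigcup_{n=0}^N C_n$; hence it can be written uniquely as $f = \sum_{n=0}^N a_n \ungra_n$ for scalars $a_n \in \cc$. Because distinct spheres are disjoint, the functions $\ungra_n$ are pairwise orthogonal in $L^2(\Gamma)$, so that $\Vert \ungra_n \Vert_2 = \sqrt{\vert C_n \vert}$ and, crucially, $\Vert f \Vert_2^2 = \sum_{n=0}^N \vert a_n \vert^2 \vert C_n \vert$.

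Next, by the triangle inequality for the operator norm together with the hypothesis applied to each $\ungra_n$, I get
\[\opnorm f \opnorm_{op} \leq \sum_{n=0}^N \vert a_n \vert\, \opnorm \ungra_n \opnorm_{op} \leq \sum_{n=0}^N \vert a_n \vert\, P(n)\, \sqrt{\vert C_n \vert}.\]
The heart of the argument is then to apply Cauchy--Schwarz to this last sum, pairing the factor $\vert a_n \vert \sqrt{\vert C_n \vert}$ against $P(n)$:
\[\sum_{n=0}^N \vert a_n \vert\, P(n)\, \sqrt{\vert C_n \vert} \leq \left(\sum_{n=0}^N \vert a_n \vert^2 \vert C_n \vert\right)^{\frac12} \left(\sum_{n=0}^N P(n)^2\right)^{\frac12} = \Vert f \Vert_2 \left(\sum_{n=0}^N P(n)^2\right)^{\frac12}.\]
The first factor reassembles exactly into $\Vert f \Vert_2$, while the second depends only on $N = L(f)$; this is precisely why the pairing must be chosen this way rather than, say, leaving the $\vert C_n \vert$ weights attached to $P(n)$.

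Finally, I would bound $\left(\sum_{n=0}^N P(n)^2\right)^{1/2}$ by a fixed polynomial in $N$. If $P$ has degree $d$, then $\vert P(n) \vert \ll (1+n)^d$, so the crude estimate $\sum_{n=0}^N P(n)^2 \leq (N+1)\max_{0 \leq n \leq N} P(n)^2 \leq C(1+N)^{2d+1}$ holds for some constant $C$ and all $N \in \nn$; taking square roots gives an upper bound $Q(N) := \sqrt{C}\,(1+N)^{d+1}$. This furnishes a polynomial $Q \in \rr[X]$ with $\opnorm f \opnorm_{op} \leq Q(L(f)) \Vert f \Vert_2$ for every radial $f$, which is exactly RRD. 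I do not anticipate any genuine obstacle: the one point that needs care is the orthogonality bookkeeping guaranteeing $\Vert f \Vert_2^2 = \sum_n \vert a_n\vert^2 \vert C_n \vert$, and the observation that the Cauchy--Schwarz split is arranged so that these weights recombine into the $L^2$ norm instead of inflating the polynomial.
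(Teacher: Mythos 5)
Your proof is correct and follows essentially the same route as the paper: decompose $f$ over the spheres $C_n$, apply the hypothesis termwise after the triangle inequality, and recombine via Cauchy--Schwarz. The only difference is cosmetic bookkeeping: the paper pairs against the summable weights $(1+n)^{-2}$ and then extracts $\sup\{Q(n) \tq a_n \neq 0\}$ for a non-decreasing majorant $Q$ of $(1+t)^2P(t)^2$, whereas you pair $P(n)$ directly against $\vert a_n\vert\sqrt{\vert C_n\vert}$ and bound the finite sum $\sum_{n\leq N}P(n)^2$ polynomially in $N$ --- both land on the same polynomial-in-$L(f)$ bound.
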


\begin{proof}

Let $f \in \cc[\Gamma]^{rad}$. We have $f = \sum_{n \in \nn} a_n \ungra_{n}$ where $a_n$ is the common value of $f$ on elements of length $n$. Notice that for all but finitely many $n$, $a_n = 0$. Assuming there is a $P \in \rr[X]$ as in the hypotheses, choose $Q \in \rr[X]$ positive and non-decreasing on $\rr_+$ such that $(1+t)^2 (P(t))^2 \leq Q(t)$ for all $t \in \rr_+$. Now, we have
$$\begin{array}{rcl}
\opnorm f \opnorm_{op} &\leq &\sum_{n \in \nn} \opnorm a_n \ungra_{n} \opnorm_{op}\\
\\
&\leq &\sum_{n \in \nn} P(n) \Vert a_n \ungra_{n} \Vert_2\\
{\small \mbox{(Cauchy-Schwarz inequality)}} &\leq &\left(\sum_{n \in \nn} (1+n)^2 (P(n))^2 \Vert a_n \ungra_{n} \Vert^2_2\right)^\frac{1}{2}\left(\sum_{n \in \nn} \left(1+n\right)^{-2}\right)^{\frac{1}{2}}\\
&\leq &C\cdot \left(\sum_{n \in \nn} Q(n)\Vert a_n \ungra_{n} \Vert^2_2\right)^\frac{1}{2}\\
&\leq &C\cdot \sup\left(\left\{Q(n) \tq a_n \not = 0\right\}\right)\left(\sum_{n \in \nn} \Vert a_n \ungra_{n} \Vert^2_2\right)^\frac{1}{2}\\
&= &C\cdot Q(L(f))\left(\sum_{n \in \nn} \Vert a_n \ungra_{n} \Vert^2_2\right)^\frac{1}{2}\\
&= &C\cdot Q(L(f)) \Vert f \Vert_2\\
\end{array}$$ so $\Gamma$ has radial property RD with respect to $L$.
\end{proof}

We are now ready to prove Theorem \ref{shalomtrick}. Let us recall that we are given $(B,\nu)$, a $\sigma$-finite probability space, on which $\Lambda$ acts measurably, and we assume that $\nu$ is quasi-invariant. We denote by $\pi : \Gamma \rightarrow \mathcal{U}(L^2(B))$ the associated Koopman representation and by $\Xi$ be the corresponding Harish-Chandra function and that we assume that there is $M \in \rr$, $P \in \rr[X]$, such that
\begin{enumerate}
\item $\forall n \in \nn, \displaystyle\sup_{\gamma \in C_n} \Xi(\gamma) \leq \fra{P(n)}{\sqrt{\vert C_n \vert}}.$ 
\item $\forall n \in \nn, \left\Vert \fra{1}{\vert C_n\vert} \sum_{\gamma \in C_n} \fra{\pi(\gamma)}{\Xi(\gamma)} \right\Vert_{2\to 2} \leq M.$
\end{enumerate}

\begin{proof}[Proof of Theorem \ref{shalomtrick}] Let us first notice that $\Vert \ungra_{C_n} \Vert_2 = \sqrt{\vert C_n \vert}$.
According to Proposition \ref{integerlength}, it is enough to prove \[\exists  P \in \rr[X], \forall n \in \nn, \quad \Vert \ungra_{C_n} \Vert_{op} \leq P(n) \sqrt{\vert C_n \vert}.\]
We will in fact prove \[\exists  P \in \rr[X], \forall n \in \nn, \quad \Vert \pi(\ungra_{C_n}) \Vert_{2\to 2} \leq P(n) \sqrt{\vert C_n \vert}.\] This is enough, because according to \cite[Lemma 2.3]{Shalom2000} (we can apply this lemma, since its hypotheses are satisfied, because $\ungra_B$ is an obvious positive vector), we have that \[\forall n \in \nn, \opnorm \textbf{1}_{C_n} \opnorm_{op} \leq \Vert \pi(\textbf{1}_{C_n}) \Vert_{2\to 2}.\]

We claim that \[\forall n \in \nn, \left\Vert \fra{1}{\vert C_n \vert} \sum_{\gamma \in C_n} \pi(\gamma) \right\Vert_{2\to 2} \leq \sup_{\gamma \in C_n} \Xi(\gamma) \left\Vert \fra{1}{\vert C_n \vert} \sum_{\gamma \in C_n} \fra{\pi(\gamma)}{\Xi(\gamma)} \right\Vert_{2\to 2}.\]
Notice that the claim, combined with (1) and (2), ends the proof of the Theorem.

Let us now prove the claim. If $h \in L^2(B)$, denote by $h_r$ and $h_i$ its real and imaginary parts and let, $\forall a \in \{r,i\}$, $h^+_a := \max(h_a,0)$ and $h^-_a := h^+_a - h_a$. Then $h^\pm_r$ and $h^\pm_i$ are all positive $L^2$ functions, and $\max\{\Vert h^+_r \Vert_2,\Vert h^-_r \Vert_2,\Vert h^+_i \Vert_2,\Vert h^-_i \Vert_2\} \leq \Vert h \Vert_2$. Let us denote, until the end of the proof, \[M_n := \fra{1}{\vert C_n \vert} \sum_{\gamma \in C_n} \pi(\gamma)\] and \[M^{\Xi}_n := \displaystyle\sup_{\gamma \in C_n} \Xi(\gamma)\fra{1}{\vert C_n \vert} \sum_{\gamma \in C_n} \fra{\pi(\gamma)}{\Xi(\gamma)}.\]

We then have \[\begin{array}{rcl}
\Vert M_n(h) \Vert^2_2 &= &\displaystyle \Vert M_n(h^+_r) \Vert^2_2 + \Vert M_n(h^-_r) \Vert^2_2 + \Vert M_n(h^+_i) \Vert^2_2 + \Vert M_n(h^-_i) \Vert^2_2\\
&\leq &\displaystyle \Vert M^\Xi_n(h^+_r) \Vert^2_2 + \Vert M^\Xi_n(h^-_r) \Vert^2_2 + \Vert M^\Xi_n(h^+_i) \Vert^2_2 + \Vert M^\Xi_n(h^-_i) \Vert^2_2\\
&= &\Vert M^{\Xi}_n h \Vert^2_2\\
\end{array}\] where the inequality comes from the fact that for every nonnegative function $f$, we have \[0 \leq M_n(f) \leq M^\Xi_n(f).\]This proves our claim.
\end{proof}

\section{Proof of Theorem \ref{theorem from continuous to discrete}}

\label{section proof theorem continuous discrete}

In this section, we prove Theorem \ref{theorem from continuous to discrete}. Let us recall the context. Let $G$ be a locally compact group with left Haar measure $\mu_G$ and $L$ be a proper integer-valued length function on $G$, $B$ be a compact space, $\nu$ be a Borel probability measure on $B$, and $\Lambda$ be a lattice in $G$. Let us denote, for every $n \in \nn$, \[C^G_n := \{g \in G \tq L(g) = n\}\]and\[C^\Lambda_n := \{g \in G \tq L(g) = n\}.\]We consider a continuous action of $G$ on $B$ which leaves $\nu$ quasi-invariant.

We make the following assumptions.

\begin{itemize}[label = \textbullet]
\item there is a constant $c$ such that for every sufficiently large $n$, $\mu_G(C^G_n) \leq c \vert C^\Lambda_n \vert$ ;
\item the Radon-Nikodym cocycle and the Harish-Chandra function are continuous ;
\item the compact subgroup $G_0$ acts transitively on $B$.
\end{itemize}

Under these assumptions, we are going to prove that there is a constant $M \in \rr_+$ such that \[\forall n \in \nn, \left\Vert \fra{1}{\left\vert C^\Lambda_n\right\vert} \sum_{\gamma \in C^\Lambda_n} \fra{\pi(\gamma)}{\Xi(\gamma)} \right\Vert_{2 \to 2} \leq M.\]

Let us define the averages on $\Gamma$ and on $G$:
\[M^G_n := \fra{1}{\mu(C^G_n)}\int_{C^G_n} \fra{\pi(g)}{\Xi(g)} \diff \mu_G(g) \in \mathcal{B}(L^2(B))\] and
\[M^\Lambda_n := \fra{1}{\vert C^\Lambda_n \vert} \sum_{\gamma \in C^\Lambda_n} \fra{\pi(\gamma)}{\Xi(\gamma)} \in \mathcal{B}(L^2(B)).\]

The goal is to prove that the family $(M^\Lambda_n)_{n \in \nn}$ is bounded in $\cali{B}(L^2(B))$ and the strategy is to prove the following chain of inequalities:

\[\Vert M^\Lambda_n \Vert_{2 \to 2} \stackrel{(1)}{\leq} \Vert M^\Lambda_n \Vert_{\infty \to \infty} \stackrel{(2)}{=} \Vert M^\Lambda_n \textbf{1}_B \Vert_{\infty} \stackrel{(3)}{\ll} \Vert M^G_n \textbf{1}_B \Vert_\infty \stackrel{(4)}{=} 1.\]

Let us first state a version of the Riesz-Thorin theorem we need (a proof of the reduction of the lemma to the general Riesz-Thorin theorem can be found in \cite[Proposition 2.8]{BOYPIN}).

\begin{lem}\label{rieszthorinade} Let $(X,m)$ be a probability space. Let $T$ be a continuous operator $L^1(X,m) \rightarrow L^1(X,m)$ such that \begin{itemize}[label=\textbullet]
\item the restriction of $T$ to $L^2(X,m)$ induces a continuous self-adjoint operator on $L^2(X,m)$;
\item the restriction of $T$ to $L^\infty(X,m)$ induces a continuous operator on $L^\infty(X,m)$.
\end{itemize}

Then we have \[\Vert T \Vert_{2 \to 2} \leq \Vert T \Vert_{\infty \to \infty}.\]
\end{lem}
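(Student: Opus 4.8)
The plan is to deduce the $L^2$ bound by interpolation between $L^1$ and $L^\infty$, after first upgrading the two hypotheses into a symmetric pair of endpoint estimates. Concretely, the Riesz--Thorin theorem applied to the pair $(L^1,L^\infty)$ at the interpolation parameter $\theta = 1/2$ lands exactly on $L^2$, since $\frac{1-\theta}{1} + \frac{\theta}{\infty} = \frac{1}{2}$, and yields
\[\Vert T \Vert_{2\to 2} \leq \Vert T \Vert_{1\to 1}^{1/2}\,\Vert T \Vert_{\infty\to\infty}^{1/2}.\]
Thus it suffices to prove the single identity $\Vert T \Vert_{1\to 1} = \Vert T \Vert_{\infty\to\infty}$; the desired inequality then follows immediately.

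To obtain that identity I would use duality together with the self-adjointness hypothesis. Because $(X,m)$ is a probability space, one has the nested dense inclusions $L^\infty \subseteq L^2 \subseteq L^1$, and $L^\infty$ is identified isometrically with the dual of $L^1$ through the pairing $(f,g) \mapsto \int_X f\bar{g}\,dm$. Let $T'$ denote the adjoint of $T \colon L^1 \to L^1$ for this pairing, a bounded operator on $L^\infty$ with $\Vert T' \Vert_{\infty\to\infty} = \Vert T \Vert_{1\to 1}$. The point is that $T'$ agrees with $T$ on $L^\infty$: for $f,g \in L^\infty$ (hence in $L^2$), self-adjointness on $L^2$ gives $\int_X (Tf)\bar{g}\,dm = \int_X f\,\overline{Tg}\,dm$, which is precisely the defining relation $\int (Tf)\bar{g} = \int f\,\overline{T'g}$ and forces $T'g = Tg$. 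Since $L^\infty$ is dense in $L^1$ this determines $T'$, and we conclude $\Vert T \Vert_{1\to 1} = \Vert T' \Vert_{\infty\to\infty} = \Vert T \Vert_{\infty\to\infty}$.

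The step needing the most care, and the one I expect to be the main obstacle, is bookkeeping the consistency of the various restrictions: one must check that the $L^1$, $L^2$ and $L^\infty$ incarnations of $T$ are genuinely restrictions of one and the same operator, so that both the interpolation theorem and the duality computation refer to the same object. This is exactly where the probability-space assumption is used, through the density of $L^\infty$ in $L^1$ and the nesting of the spaces. Once consistency is in hand, the self-adjointness identity above is immediate, Riesz--Thorin with $\theta = 1/2$ gives the geometric-mean bound, and substituting $\Vert T \Vert_{1\to 1} = \Vert T \Vert_{\infty\to\infty}$ yields $\Vert T \Vert_{2\to 2} \leq \Vert T \Vert_{\infty\to\infty}$. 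A self-contained reduction of this statement to the standard Riesz--Thorin theorem is the content of \cite[Proposition 2.8]{BOYPIN}.
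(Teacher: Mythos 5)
Your argument is correct and is essentially the same as the paper's: the paper does not spell out a proof but delegates exactly this reduction to \cite[Proposition 2.8]{BOYPIN}, which is the standard argument you give (self-adjointness on $L^2$ plus $L^1$--$L^\infty$ duality to get $\Vert T \Vert_{1\to 1} = \Vert T \Vert_{\infty\to\infty}$, then Riesz--Thorin at $\theta = \tfrac{1}{2}$). Your attention to the consistency of the three incarnations of $T$ is the right point to be careful about, and it is handled correctly.
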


\begin{lem}\label{norme infinie realisee} We have that $\Vert M^{\Lambda}_n \Vert_{\infty \to \infty} = \Vert M^{\Lambda}_n \ungra_{B} \Vert_\infty$.
\end{lem}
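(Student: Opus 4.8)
The plan is to exploit the fact that $M^\Lambda_n$ is built, with nonnegative coefficients, out of the Koopman operators $\pi(\gamma)$, each of which is a positivity-preserving weighted composition operator. Concretely, by definition $\pi(\gamma)h(b) = c(\gamma^{-1},b)^{\frac{1}{2}} h(\gamma^{-1}b)$, and since the Radon-Nikodym cocycle $c$ is strictly positive, one has the pointwise identity $|\pi(\gamma)h| = \pi(\gamma)|h|$ for every $h \in L^\infty(B)$. Because $C^\Lambda_n$ is finite (the length function is proper on a discrete group) and the coefficients $\fra{1}{\vert C^\Lambda_n\vert\, \Xi(\gamma)}$ are strictly positive, the triangle inequality then yields, for every $h$, the pointwise domination $|M^\Lambda_n h| \le M^\Lambda_n |h|$; in particular $M^\Lambda_n$ maps nonnegative functions to nonnegative functions.

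With this in hand, I would prove the two inequalities separately. The inequality $\Vert M^\Lambda_n \ungra_B \Vert_\infty \le \Vert M^\Lambda_n\Vert_{\infty\to\infty}$ is immediate, since $\Vert \ungra_B \Vert_\infty = 1$. For the reverse inequality, take any $h \in L^\infty(B)$ with $\Vert h\Vert_\infty \le 1$; then $|h| \le \ungra_B$, so the nonnegative function $\ungra_B - |h|$ is sent by $M^\Lambda_n$ to a nonnegative function, whence $M^\Lambda_n|h| \le M^\Lambda_n\ungra_B$ pointwise. Combined with the domination above, this gives $|M^\Lambda_n h| \le M^\Lambda_n|h| \le M^\Lambda_n \ungra_B$ pointwise, and taking the essential supremum gives $\Vert M^\Lambda_n h\Vert_\infty \le \Vert M^\Lambda_n \ungra_B\Vert_\infty$. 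Supremizing over such $h$ yields $\Vert M^\Lambda_n\Vert_{\infty\to\infty} \le \Vert M^\Lambda_n\ungra_B\Vert_\infty$, and the two inequalities together give the claimed equality.

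The main obstacle---really the only place one must be careful---is the treatment of complex-valued $h$: splitting $h$ into its real and imaginary parts (or into four nonnegative pieces) and invoking positivity preservation directly would introduce a spurious multiplicative constant and would not give equality. The clean route is the pointwise modulus identity $|\pi(\gamma)h| = \pi(\gamma)|h|$, valid because each $\pi(\gamma)$ is a composition operator twisted by the positive weight $c(\gamma^{-1},\cdot)^{\frac{1}{2}}$, which after summation degrades only to the inequality $|M^\Lambda_n h| \le M^\Lambda_n|h|$---and that inequality is exactly what is needed. A minor point to check along the way is that $M^\Lambda_n$ genuinely defines a bounded operator on $L^\infty(B)$ and that $M^\Lambda_n \ungra_B$ is a bounded function; both follow from the finiteness of $C^\Lambda_n$ together with the continuity, hence boundedness on the compact space $B$, of $c(\gamma^{-1},\cdot)$ and of $\Xi$, which are among the standing hypotheses of the theorem.
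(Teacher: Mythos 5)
Your proof is correct and follows essentially the same route as the paper's: both reduce the nontrivial inequality $\Vert M^\Lambda_n\Vert_{\infty\to\infty}\le\Vert M^\Lambda_n\ungra_B\Vert_\infty$ to the positivity of $M^\Lambda_n$ together with the domination $|h|\le\Vert h\Vert_\infty\ungra_B$. The only difference is cosmetic: the paper sandwiches a (real-valued) $h$ between $\pm\Vert h\Vert_\infty\ungra_B$ and applies monotonicity of $M^\Lambda_n$, whereas you pass through the pointwise modulus identity $|\pi(\gamma)h|=\pi(\gamma)|h|$, which also settles the complex-valued case that the paper's displayed inequalities leave implicit.
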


\begin{proof} Let $h \in L^\infty(B)$. From the pointwise inequality (valid almost everywhere) \[- \opnorm h \opnorm_\infty \ungra_{B} \leq h \leq \opnorm h \opnorm_\infty \ungra_{B}\] we deduce the pointwise inequality (valid almost everywhere) \[- \Vert h \Vert_\infty M^{\Lambda}_n \ungra_{B} \leq M^{\Lambda}_n h \leq \Vert h \Vert_\infty M ^{\Lambda}_n \ungra_{B}.\] So, we get $\Vert M^\Lambda_n h \Vert_\infty \leq \Vert h \Vert_\infty \Vert M^\Lambda_n \ungra_{B} \Vert_\infty$, so $\Vert M^\Lambda_n \Vert_{\infty \to \infty} \leq \Vert M^\Lambda_n \ungra_{B} \Vert_\infty$.
\end{proof}

\begin{lem}\label{stability}
Let $G$ be a locally compact group, acting on a probability space $(B,\nu)$ such that the Radon-Nikodym cocycles are continuous. Let us recall that $\Lambda$ is a discrete subgroup of $G$. Then, there exist a relatively compact neighborhood $U$ of $e$ in $G$ and non-zero constants $C_1,C_2$ such that
\begin{enumerate}
\item $\Lambda \cap U = \{e\}$;
\item $\forall u \in U$, $\forall g \in G$, $\forall b \in B$, 
$C_1 (\pi(g) \ungra_{B}(b))^2=C_1c(g^{-1},b)\leq c((gu)^{-1},b)=(\pi(gu) \ungra_{B}(b))^2$;
\item $\forall u \in U$, $\forall g \in G$, $\Xi(gu)\geq C_2\Xi(g)$.
\end{enumerate}
\end{lem}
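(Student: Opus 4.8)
The plan is to establish the three items in turn, with (1) coming straight from discreteness and with (2) and (3) both resting on the cocycle identity satisfied by $c$, together with the continuity hypothesis and the compactness of $B$ available from the context of this section. For (1), I would invoke discreteness of $\Lambda$ in $G$: there is an open neighborhood $V$ of $e$ with $\Lambda \cap V = \{e\}$, and since $G$ is locally compact I may choose a relatively compact open $U_1 \subseteq V$ containing $e$, so that $\Lambda \cap U_1 = \{e\}$. The neighborhood $U$ ultimately produced will be an open subset of $U_1$ containing $e$, hence will still satisfy (1).

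The key algebraic input for (2) and (3) is the cocycle relation forced by the fact that $\pi$ is a homomorphism, namely $c(st,b) = c(t,b)\,c(s,tb)$ for all $s,t \in G$ and $b \in B$. Applying it with $s = u^{-1}$ and $t = g^{-1}$ gives
\[
c((gu)^{-1},b) = c(u^{-1}g^{-1},b) = c(g^{-1},b)\,c(u^{-1},g^{-1}b).
\]
Recalling that $\pi(g)\ungra_B(b) = c(g^{-1},b)^{\frac12}$ (since $\ungra_B \equiv 1$), this identity is exactly what explains the equalities in the statement and shows that passing from $g$ to $gu$ multiplies $c(g^{-1},b)$ by the factor $c(u^{-1},g^{-1}b)$. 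Hence (2) reduces to producing $C_1 > 0$ and a neighborhood $U$ of $e$ with $c(u^{-1},b') \geq C_1$ for every $u \in U$ and every $b' \in B$, the point $b' = g^{-1}b$ being allowed to range over all of $B$.

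To obtain such a uniform lower bound I would use that $e$ acts trivially, so that $c(e,\cdot)$ is the Radon-Nikodym density of $\nu$ with respect to itself, equal to $1$ almost everywhere and therefore, being continuous and $\nu$ of full support, identically $1$. Combining this with the continuity of $(u,b') \mapsto c(u^{-1},b')$ and the compactness of $B$, the set $\{(u,b') : c(u^{-1},b') > \tfrac12\}$ is open and contains the compact slice $\{e\}\times B$, so by the tube lemma it contains $U_0 \times B$ for some open $U_0 \ni e$. Taking $C_1 = \tfrac12$ and $U := U_0 \cap U_1$ settles (1) and (2) at once. Finally, (3) is immediate by integrating the cocycle identity: since $\Xi(g) = \int_B c(g^{-1},b)^{\frac12}\diff\nu(b)$, the same computation yields
\[
\Xi(gu) = \int_B c(g^{-1},b)^{\frac12}\,c(u^{-1},g^{-1}b)^{\frac12}\diff\nu(b) \geq \sqrt{C_1}\,\Xi(g),
\]
so that $C_2 = \sqrt{C_1}$ works.

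The only genuine obstacle is the passage from the pointwise fact $c(e,\cdot) = 1$ to a lower bound that is uniform simultaneously in the group variable $u$ (near $e$) and in the space variable $b'$ (everywhere on $B$); this is precisely where the compactness of $B$ and the continuity of the cocycle are indispensable. Everything else follows mechanically from the cocycle relation.
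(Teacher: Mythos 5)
Your proof is correct and follows essentially the same route as the paper's: both arguments reduce (2) and (3) to a uniform positive lower bound on $c(u^{-1},b')$ for $u$ near $e$ and $b'\in B$, obtained from continuity of the cocycle together with compactness of $B$, and then transfer it to arbitrary $g$ via the cocycle identity $c(u^{-1}g^{-1},b)=c(u^{-1},g^{-1}b)\,c(g^{-1},b)$. The only cosmetic difference is that the paper fixes a compact symmetric neighborhood $V$ first and takes the minimum and maximum of $c$ on $V\times B$, whereas you fix the constant $\tfrac12$ first and extract the neighborhood by the tube lemma; these are the same compactness argument.
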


\begin{proof} Let $V$ be a symmetric compact neighborhood of $e$ in $G$. Since $c$ is continuous, it reaches its minimum $C_1$ and its maximum $C_2$ on the compact $V\times B$. We choose a symmetric neighborhood of the identity $W$ which trivially intersects $\Gamma$ and take $U := V \cap W$.  The cocycle identity 
\[
\forall b \in B,\ \  v \in V,\ \ 1 = c(v^{-1}v,b) = c(v^{-1},vb)c(v,b)
\] 
and the symmetry of $V$ imply that $C_1C_2 = 1$. 
Since \[\forall g\in G,\ \  \forall b\in B,\ \  \forall v \in V,\ \ C_1 c(g,b) \leq c(vg,b) \leq C_2 c(g,b),\] the left inequality gives us $(1)$ and the right inequality, used in the following calculation \[\begin{array}{rcl}\forall v\in V,\ \ \forall g \in G,\ \  \Xi(gv) &= &\Xi(v^{-1}g^{-1}) = \int_B c(v^{-1}g^{-1},b)^{\frac{1}{2}} \diff \mu(b) \\
&\leq &\int_B C_2c(g^{-1},b)^{\frac{1}{2}}\diff \mu(b) = C_2\Xi(g^{-1}) = C_2\Xi(g),
\end{array}\] allows us to prove $(2)$. 
\end{proof}

\begin{prop} \label{discret-au-continu}There exists a relatively compact neighborhood $U$ of $e$ in $G$ and a constant $C$ such that for every finite subset $A \subset \Lambda$, \[\sum_{\gamma \in A} \fra{\pi(\gamma) \ungra_{B}}{\Xi(\gamma)} \leq C \int_{AU} \fra{\pi(g)\ungra_{B}}{\Xi(g)} \diff\mu_G(g).\]
\end{prop}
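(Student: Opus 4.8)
The plan is to establish the inequality pointwise in $b \in B$ (both sides being nonnegative functions of $b$), by comparing each term $\fra{\pi(\gamma)\ungra_{B}(b)}{\Xi(\gamma)}$ with the average over the left coset $\gamma U$ of the integrand $\fra{\pi(g)\ungra_{B}(b)}{\Xi(g)}$, and then summing over $\gamma \in A$. Here $U$ is the neighborhood of Lemma \ref{stability}, possibly shrunk. Since $c > 0$ everywhere, $\Xi(g) = \int_B c(g^{-1},b)^{1/2}\diff\nu(b) > 0$, and $c,\Xi$ are continuous while $\gamma U$ is relatively compact, all integrals below are finite and their integrands nonnegative.

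First I would prove the key pointwise estimate: there is a constant $C'$, depending only on $U$, such that for all $\gamma \in \Lambda$, $u \in U$ and $b \in B$,
\[\fra{\pi(\gamma)\ungra_{B}(b)}{\Xi(\gamma)} \leq C' \fra{\pi(\gamma u)\ungra_{B}(b)}{\Xi(\gamma u)}.\]
This is exactly where Lemma \ref{stability} is used. Recalling $\pi(g)\ungra_{B}(b) = c(g^{-1},b)^{1/2}$, part $(2)$ of that lemma gives the numerator bound $\pi(\gamma)\ungra_{B}(b) \leq C_1^{-1/2}\pi(\gamma u)\ungra_{B}(b)$, while the Harish-Chandra estimate controls the denominator through $\Xi(\gamma u) \leq C_2\Xi(\gamma)$, equivalently $\fra{1}{\Xi(\gamma)} \leq C_2\fra{1}{\Xi(\gamma u)}$. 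Combining the two yields the displayed estimate with $C' := C_1^{-1/2}C_2$. The one point requiring care is the direction of these two inequalities: one must read them off so that the quotient is bounded the right way, which is legitimate since $U$ is symmetric and the bounds of Lemma \ref{stability} hold for $u$ and $u^{-1}$ alike.

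Next I would integrate the pointwise estimate over $u \in U$ against $\mu_G$. As the left-hand side is independent of $u$, this gives $\mu_G(U)\fra{\pi(\gamma)\ungra_{B}(b)}{\Xi(\gamma)} \leq C'\int_U \fra{\pi(\gamma u)\ungra_{B}(b)}{\Xi(\gamma u)}\diff\mu_G(u)$, and left-invariance of $\mu_G$ rewrites the right-hand integral as $\int_{\gamma U}\fra{\pi(g)\ungra_{B}(b)}{\Xi(g)}\diff\mu_G(g)$. Dividing by $\mu_G(U) \in (0,\infty)$ and summing over $\gamma \in A$ produces
\[\sum_{\gamma \in A}\fra{\pi(\gamma)\ungra_{B}(b)}{\Xi(\gamma)} \leq \fra{C'}{\mu_G(U)}\sum_{\gamma \in A}\int_{\gamma U}\fra{\pi(g)\ungra_{B}(b)}{\Xi(g)}\diff\mu_G(g).\]

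The main obstacle is to collapse the sum of integrals on the right into the single integral over $AU = \bigcup_{\gamma \in A}\gamma U$. Since the integrand is nonnegative, this reduces to knowing that the translates $\{\gamma U : \gamma \in \Lambda\}$ are pairwise disjoint; indeed $\gamma_1 U \cap \gamma_2 U \neq \emptyset$ for $\gamma_1 \neq \gamma_2$ in $\Lambda$ is equivalent to $\gamma_2^{-1}\gamma_1 \in UU^{-1}$, so disjointness amounts to $\Lambda \cap UU^{-1} = \{e\}$. This is stronger than the condition $\Lambda \cap U = \{e\}$ supplied by Lemma \ref{stability}, so I would use the discreteness of $\Lambda$ to shrink $U$ to a relatively compact symmetric neighborhood satisfying $\Lambda \cap UU^{-1} = \{e\}$; shrinking preserves all the estimates above. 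With disjoint translates, $\sum_{\gamma \in A}\int_{\gamma U} = \int_{AU}$, and setting $C := C'/\mu_G(U)$ completes the proof.
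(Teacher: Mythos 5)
Your proof is correct and follows essentially the same route as the paper's: write each term as its average over the coset $\gamma U$, use Lemma \ref{stability} to compare $\pi(\gamma)\ungra_B(b)$ and $\Xi(\gamma)$ with their translates $\pi(\gamma u)\ungra_B(b)$ and $\Xi(\gamma u)$, change variables by left-invariance, and sum over $\gamma \in A$. If anything you are more careful than the printed proof on one point: collapsing $\sum_{\gamma \in A}\int_{\gamma U}$ into $\int_{AU}$ does require the translates $\gamma U$ to be pairwise disjoint, i.e. $\Lambda \cap UU^{-1} = \{e\}$, which you rightly arrange by shrinking $U$ using the discreteness of $\Lambda$, whereas the paper only records $\Lambda \cap U = \{e\}$ and leaves this step implicit.
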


\begin{proof}

Let $U$ be a compact neighborhood of $e$ in $G$ as in Lemma \ref{stability}.

Let $\gamma \in \Lambda$ and $b \in B$. Then \[\begin{array}{rcl}
\fra{\pi(\gamma)\ungra_{B}(b)}{\Xi(\gamma)}  &= &\fra{1}{\mu(U)}\int_{U} \fra{\pi(\gamma)\ungra_{B}(b)}{\Xi(\gamma)}du\\
&\leq &\fra{1}{\mu(U)}\int C_1 \fra{\pi(\gamma u)\ungra_{B}(b)}{\Xi(\gamma)}du\\
&\leq &\fra{C_1}{\mu(U)} \int_U \fra{\pi(\gamma u)\ungra_{B}(b)}{\frac{\Xi(\gamma u)}{C_2}} du\\
&\leq &\fra{C_1 C_2}{\mu(U)} \int_{\gamma U} \fra{\pi(u)\ungra_{B}(b)}{\Xi(u)}  du\\
\end{array}\]

Now, a summation over all $\gamma \in A$ ends the proof.
\end{proof}

\begin{prop}\label{coro discret au continu} There is a constant $C$ such that for all $n \in \nn$, and all $b \in B$, \[0 \leq M^\Lambda_n \ungra_{B}(b) \leq C M^G_{n} \ungra_{B}(b).\] In particular, we have that \[\Vert M^\Lambda_n \ungra_{B} \Vert_\infty \ll \Vert M^G_n \ungra_{B} \Vert_\infty.\]
\end{prop}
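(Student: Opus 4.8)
The plan is to combine the two preceding results by specializing the general estimate of Proposition \ref{discret-au-continu} to the finite set $A := C^\Lambda_n$ and then invoking the growth assumption on $\Lambda$. The positivity $0 \leq M^\Lambda_n \ungra_B(b)$ is immediate, since each $\pi(\gamma)\ungra_B$ is a square root of a Radon-Nikodym cocycle, hence a nonnegative function, and $\Xi(\gamma) > 0$; thus $M^\Lambda_n \ungra_B$ is a nonnegative average of nonnegative functions.

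For the upper bound, I would apply Proposition \ref{discret-au-continu} with $A = C^\Lambda_n$, which yields, for every $b \in B$,
\[
\sum_{\gamma \in C^\Lambda_n} \fra{\pi(\gamma)\ungra_B(b)}{\Xi(\gamma)} \leq C \int_{C^\Lambda_n U} \fra{\pi(g)\ungra_B(b)}{\Xi(g)} \diff\mu_G(g).
\]
The key point is then to control the domain of integration $C^\Lambda_n U$ in terms of spheres of $G$. Since $U$ is a relatively compact neighborhood of $e$ and $L$ is a length function, the length of any $\gamma u$ with $\gamma \in C^\Lambda_n$ and $u \in U$ differs from $n$ by at most $\max_{u \in U} L(u)$, a fixed constant $R$. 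Hence $C^\Lambda_n U$ is contained in the union of finitely many consecutive $G$-spheres, say $\bigcup_{|k - n| \leq R} C^G_k$, and the integral above is bounded by $\sum_{|k-n|\leq R} \int_{C^G_k} \pi(g)\ungra_B(b)/\Xi(g)\, \diff\mu_G(g)$.

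At this stage I would rewrite each of these integrals as $\mu_G(C^G_k)\, M^G_k \ungra_B(b)$ and use the growth assumption $\mu_G(C^G_k) \leq c\,|C^\Lambda_k|$ (valid for $k$ large), together with the fact that the neighboring cardinalities $|C^\Lambda_k|$ are comparable to $|C^\Lambda_n|$ up to a multiplicative constant depending only on $R$ and the group—this comparability following from the same length-shift argument applied to translation by a fixed element realizing the length difference. Dividing through by $|C^\Lambda_n|$ converts the left-hand sum into $M^\Lambda_n \ungra_B(b)$ and, after absorbing all the constants and the finitely many terms into a single constant $C$, produces $M^\Lambda_n \ungra_B(b) \leq C\, M^G_n \ungra_B(b)$ as desired. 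Taking the supremum over $b \in B$ gives the stated $L^\infty$ inequality $\Vert M^\Lambda_n \ungra_B \Vert_\infty \ll \Vert M^G_n \ungra_B \Vert_\infty$.

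The main obstacle I anticipate is the bookkeeping around the domain $C^\Lambda_n U$: one must verify carefully that $M^G_n$ (the average over the single sphere $C^G_n$) rather than some average over a band of spheres appears on the right, which requires the comparability of $\mu_G(C^G_k)\,M^G_k\ungra_B$ across the finitely many relevant indices $k$. If the Harish-Chandra function and cocycle weights vary too wildly between adjacent spheres this comparison could fail, so the cleanest route is probably to reindex using the continuity and a uniform estimate on $U$, reducing everything to the single integral $\int_{C^G_n U'} \pi(g)\ungra_B/\Xi(g)\,\diff\mu_G$ for a slightly enlarged neighborhood $U'$ and bounding that directly by a constant times $\mu_G(C^G_n) M^G_n \ungra_B$; the remaining estimates are then routine given Lemma \ref{stability} and the growth hypothesis.
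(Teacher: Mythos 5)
Your overall architecture matches the paper's: positivity is immediate, and the upper bound comes from applying Proposition \ref{discret-au-continu} with $A = C^\Lambda_n$ and then invoking the growth assumption $\mu_G(C^G_n) \leq c\,\vert C^\Lambda_n\vert$. But the step where you handle the domain $C^\Lambda_n U$ has a genuine gap, and the obstacle you flag at the end is real, not just bookkeeping. Your band-of-spheres argument requires two comparability statements across the indices $k$ with $\vert k-n\vert \leq R$: that $\vert C^\Lambda_k\vert \ll \vert C^\Lambda_n\vert$, and that $M^G_k\ungra_B$ is comparable to $M^G_n\ungra_B$. The first is not justified by your ``translation by a fixed element realizing the length difference'': for a general proper length function there need not exist an element of length exactly $n-k$, and even if $\delta$ has $L(\delta)=n-k$, the map $\gamma\mapsto\gamma\delta$ only gives $L(\gamma\delta)\leq n$, not $L(\gamma\delta)=n$, so it need not land in $C^\Lambda_n$; adjacent spheres can a priori have wildly different cardinalities (some may even be empty). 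Your proposed fallback --- integrating over $C^G_nU'$ and bounding that by a constant times $\mu_G(C^G_n)M^G_n\ungra_B$ --- runs into exactly the same problem, since $C^G_nU'$ again spills over into neighboring spheres.

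The missing idea is to use a standing hypothesis of Theorem \ref{theorem from continuous to discrete} that you never invoke: the subgroup $G_0$ of elements of length $0$ is \emph{open}. One may therefore shrink the neighborhood $U$ furnished by Lemma \ref{stability} so that $U\subseteq G_0$ (the constants $C_1,C_2$ only improve under shrinking). Then for $\gamma\in C^\Lambda_n$ and $u\in U$ one has $L(\gamma u)\leq L(\gamma)+L(u)=n$ and $L(\gamma)\leq L(\gamma u)+L(u^{-1})=L(\gamma u)$, so $L(\gamma u)=n$ and $C^\Lambda_nU\subseteq C^G_n$ exactly. The integral from Proposition \ref{discret-au-continu} is then over a subset of the single sphere $C^G_n$, the integrand being nonnegative, and dividing by $\vert C^\Lambda_n\vert$ and applying the growth estimate yields $M^\Lambda_n\ungra_B(b)\leq C'\,M^G_n\ungra_B(b)$ directly, with no comparison between distinct spheres needed. (Alternatively, your band-of-spheres route could be patched for the second comparability issue by citing Lemma \ref{constante vaut un}, which gives $M^G_k\ungra_B\equiv 1$ for all $k$, but the first issue --- comparability of the $\vert C^\Lambda_k\vert$ --- would still remain unaddressed.)
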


\begin{proof} Take a neighborhood $U$ of $e$ in $G$ and $C$ given by Proposition \ref{discret-au-continu}. We can assume that $\forall g \in U$, $L(g) = 0$. We have $C^\Lambda_n U \subseteq C^G_n$, so, by Proposition \ref{discret-au-continu}, we have, for all $n$, \[\sum_{\gamma \in C^\Lambda_n} \fra{\pi(\gamma) \ungra_{B}}{\Xi(\gamma)} \leq C \int_{C^G_n} \fra{\pi(g)\ungra_{B}}{\Xi(g)} \diff\mu_G(g)\] and by Lemma \ref{growthestimates}, there is $m > 0$ such that for all $n$, \[\fra{1}{\vert C^\Lambda_n \vert}\sum_{\gamma \in C^\Lambda_n} \fra{\pi(\gamma) \ungra_{B}}{\Xi(\gamma)} \leq \fra{C}{m\mu\left(C^G_n\right)}\int_{C^G_n} \fra{\pi(g)\ungra_{B}}{\Xi(g)}  \diff\mu_G(g)\] which gives the claim.
\end{proof}

\begin{lem}\label{constante vaut un} The function $M^G_n \ungra_B$ is constant and equal to $1$.
\end{lem}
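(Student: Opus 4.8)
I want to show that the function $M^G_n \ungra_B$ is identically equal to $1$ on $B$. Recall that
\[
M^G_n \ungra_B = \fra{1}{\mu(C^G_n)}\int_{C^G_n} \fra{\pi(g)\ungra_B}{\Xi(g)}\diff\mu_G(g),
\]
so the claim amounts to evaluating the pointwise integral of $g \mapsto \pi(g)\ungra_B(b)/\Xi(g)$ over the sphere $C^G_n$, for each $b \in B$.

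**Plan.** The key is to use the explicit form of the Koopman representation on the constant vector $\ungra_B$. By definition $\pi(g)\ungra_B(b) = c(g^{-1},b)^{1/2}\ungra_B(g^{-1}b) = c(g^{-1},b)^{1/2}$, since $\ungra_B \equiv 1$. So the integrand is $c(g^{-1},b)^{1/2}/\Xi(g)$, and I want to show
\[
\fra{1}{\mu(C^G_n)}\int_{C^G_n} \fra{c(g^{-1},b)^{1/2}}{\Xi(g)}\diff\mu_G(g) = 1 \quad \text{for every } b \in B.
\]
The natural strategy is to first establish this for a single "reference'' point $b_0$, where it follows essentially by definition of $\Xi$, and then propagate it to all $b$ using the transitivity of $G_0$ and the invariance properties bundled into the hypotheses. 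Concretely, recall $\Xi(g) = \int_B c(g^{-1},b)\,^{1/2}\diff\nu(b)$, so integrating the claimed identity against $\diff\nu(b)$ and applying Fubini gives
\[
\fra{1}{\mu(C^G_n)}\int_{C^G_n}\fra{1}{\Xi(g)}\left(\int_B c(g^{-1},b)^{1/2}\diff\nu(b)\right)\diff\mu_G(g) = \fra{1}{\mu(C^G_n)}\int_{C^G_n} 1 \,\diff\mu_G(g) = 1,
\]
which is consistent with the claim and shows the $\nu$-average of $M^G_n\ungra_B$ is $1$. This by itself is not enough, so the real work is to upgrade this to a pointwise statement.

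**The pointwise argument.** The idea is to exploit that $G_0$ (the elements of length $0$) is compact, open, acts transitively on $B$, and leaves $\nu$ invariant, and crucially that $L$ is $G_0$-biinvariant: since $L(kgk') = L(g)$ for $k,k'\in G_0$ (length $0$ elements), the sphere $C^G_n$ is stable under left and right multiplication by $G_0$. Fix $b \in B$ and pick $k \in G_0$ with $kb_0 = b$, where $b_0$ is any basepoint. I would perform the change of variables $g \mapsto kg$ (which preserves $\mu_G$ up to the modular function, but on the unimodular-relevant averaged quantity and with $G_0$ compact one checks this factor is $1$) and use the cocycle relation $c(g^{-1}k^{-1},b) = c(g^{-1},k^{-1}b)\,c(k^{-1},b)$ together with $c(k^{-1},b)=1$ (since $G_0$ preserves $\nu$, its Radon--Nikodym cocycle is identically $1$) to transport $M^G_n\ungra_B(b)$ to $M^G_n\ungra_B(b_0)$. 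Since $G_0$ acts transitively, $M^G_n\ungra_B$ is therefore a constant function; call its value $c_n$. But a constant function whose $\nu$-average is $1$ (as computed above) must equal $1$, so $c_n = 1$.

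**Expected obstacle.** The main delicate point is the bookkeeping in the change of variables: one must verify that left translation by $k \in G_0$ and the substitution in the group integral over $C^G_n$ interact correctly with both the Radon--Nikodym cocycle on $B$ and the Haar measure $\mu_G$, so that the invariance of $\nu$ under $G_0$ and the biinvariance of $L$ combine to give exactly the identity $M^G_n\ungra_B(kb_0)=M^G_n\ungra_B(b_0)$ with no leftover constants. In particular I need that the cocycle of any $k \in G_0$ is trivial, which follows from $G_0$ preserving $\nu$, and that conjugating/translating $C^G_n$ by $G_0$ leaves it invariant, which follows from $L(kg)=L(g)$. Once these two invariances are pinned down, constancy of $M^G_n\ungra_B$ is immediate, and the normalization to $1$ is forced by the Fubini computation above.
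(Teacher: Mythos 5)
Your proposal is correct and follows essentially the same route as the paper: constancy of $M^G_n\ungra_B$ via the change of variables $g\mapsto kg$, the cocycle identity, triviality of $c(k^{-1},\cdot)$ for $k\in G_0$, and transitivity of $G_0$, then normalization to $1$ by Fubini and the definition of $\Xi$ (the paper just does these two steps in the opposite order). Your worry about the modular function is unnecessary, since $\mu_G$ is a \emph{left} Haar measure and $g\mapsto kg$ is a left translation, and the $G_0$-bi-invariance of $C^G_n$ follows at once from the triangle inequality with $L(k)=0$.
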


\begin{proof} Let $k \in G_0$. First of all, let us notice that we have, for all $b \in B$, \[M^G_n \ungra_{B}(b) = M^G_n \ungra_{B}(kb).\]To prove it, let us make the following computation using the cocycle property and the fact that $G_0$ preserves $\mu$, so that $c(k^{-1},\cdot) \equiv 1$:
\[\begin{array}{rcl}
M^G_n(\ungra_B)(kb) &= &\displaystyle\int_{C^G_n} \left(\pi(g) \ungra_B\right)(kb) \diff \mu_G(g)\\
&= &\displaystyle\int_{C^G_n} c(g^{-1},kb)^{\frac{1}{2}} \ungra_B(kb) \diff \mu_G(g)\\
&\stackrel{h = k^{-1}g}{=} &\displaystyle\int_{C^G_n} c(h^{-1}k^{-1},kb)^{\frac{1}{2}} \ungra_B(h^{-1}b)\diff\mu_G(g)\\
&= &\displaystyle\int_{C^G_n} c(h^{-1},k^{-1}kb)^{\frac{1}{2}}c(k^{-1},b)^{\frac{1}{2}} \ungra_B(h^{-1}b)\diff \mu_G(g)\\
&= &\displaystyle\int_{C^G_n} c(h^{-1},b)^{\frac{1}{2}}\ungra_B(h^{-1}b)\diff \mu_G(g)\\
&= &\displaystyle M^G_n(\ungra_B)(b).\\
\end{array}\]Moreover, recall that, by assumption, the action of $G_0$ on $B$ is transitive, so the function $M^G_n\ungra_B$ is constant and equal to $c \in \rr$.

We integrate and use Fubini's theorem to prove that $c$ is in fact $1$:\[\begin{array}{rcl}
c &= &\displaystyle\int_B M^G_n \ungra_B\\
&= &\fra{1}{\mu(C^G_n)}\int_B \int_{C^G_n} \fra{c(g^{-1},b)^{\frac{1}{2}}}{\Xi(g)} \ungra_B(g^{-1}b) \diff \mu_G(g) \diff\mu(b)\\
&= &\fra{1}{\mu(C^G_n)}\int_{C^G_n} \int_B \fra{c(g^{-1},b)^{\frac{1}{2}}}{\Xi(g)} \diff\mu(b)\diff \mu_G(g)\\
&= &\fra{1}{\mu(C^G_n)}\int_{C^G_n}\frac{\Xi(g^{-1})}{\Xi(g)}\diff \mu_G(g)\\
&= &1\\
\end{array}\]
\end{proof}

\begin{proof}[Proof of Theorem \ref{theorem from continuous to discrete}]

Let us recall the chain of inequalities we want to prove:
\[\Vert M^\Lambda_n \Vert_{2 \to 2} \stackrel{(1)}{\leq} \Vert M^\Lambda_n \Vert_{\infty \to \infty} \stackrel{(2)}{=} \Vert M^\Lambda_n \textbf{1}_B \Vert_{\infty} \stackrel{(3)}{\ll} \Vert M^G_n \textbf{1}_B \Vert_\infty \stackrel{(4)}{=} 1.\]

Inequality $(1)$ is a simple application of the Riesz-Thorin theorem (Lemma \ref{rieszthorinade}). Equality $(2)$ is proved in Lemma \ref{norme infinie realisee}. Inequality $(3)$ is proved in Proposition \ref{coro discret au continu} and equality $(4)$ is proved in Lemma \ref{constante vaut un}.
\end{proof}

\section{Proof of Corollary \ref{coro un}}

\label{radialrd}

\subsection{Structure of the proof}

Let us recall that in Section \ref{defi Gamma G}, we defined a ring $A$, fields $\kk_0$ et $\kk_\infty$, two $(q+1)$-regular trees $T_0$ and $T_\infty$, a distance $d_{\mathbb{I}}$ on the product of these trees, an action of $G := \sln_2(\kk_0) \times \sln_2(\kk_\infty)$ on $T_0 \times T_\infty$, a length function $L$ on $G$, and $\Gamma$ to be the image, under the diagonal embedding, of $\sln_2(A)$ in $G$.

The proof of Corollary \ref{coro un} goes as follows.

\begin{proof}[Proof of Corollary \ref{coro un}]

Let $B$ be the compact space, $\nu$ be the Borel probability measure on $B$, defined in Subsection \ref{subsubsection: A useful unitary representation}, which also defines a measurable action $G \curvearrowright B$ leaving $\nu$-invariant. Let us denote by $\pi$ the Koopman representation of $G$ associated to this action. According to Proposition \ref{prop: quasi-invariant and continuous cocycle} and Proposition \ref{lemma: HC}, the Radon-Nikodym and the Harish-Chandra function are continuous, and according to Proposition \ref{groupgrowth}, the growth estimate assumption in Theorem \ref{theorem from continuous to discrete} is satisfied. We deduce then from Theorem \ref{theorem from continuous to discrete} that \[\left\Vert\fra{1}{\vert C^\Gamma_n \vert} \sum_{\gamma \in C^\Gamma_n} \fra{\pi(\gamma)}{\Xi(\gamma)} \right\Vert_{2 \to 2} \ll 1.\]Moreover, we deduce from Proposition \ref{first ingredient shalom trick} that there is a polynomial $P$ such that \[\sup_{\gamma \in C^\Gamma_n} \Xi(\gamma) \sqrt{\vert C^\Gamma_n \vert} \leq P(n).\]Therefore, the two assumptions of the dynamical criterion are satisfied; so $\Gamma$ has RRD with respect to $L$.
\end{proof}

In the following subsections, we prove all the propositions used in the proof.

\subsection{The Koopman representation on the product of the boundaries}\label{subsubsection: A useful unitary representation}

Here we recall the construction of the boundary of a tree in order to build a useful compact space $B$ on which $G$ acts and endow it with a quasi-invariant Borel probability measure.

Let $T$ be a $d$-regular tree. Let us recall a few facts from the theory of $\cat$ spaces and measures at infinity (see \cite{BOURD} for details).

Let us denote by $\partial T$ the set of equivalence classes of asymptotic rays in $T$ (the equivalence class of $r$ is denoted by $r(+\infty)$). Fixing a point $x \in T$, we can consider $\partial_x T$, the set of geodesic rays starting at $x$. The quotient map $\partial_x T \rightarrow \partial T$ can be shown to be a bijection. The image of the topology of uniform convergence on compact sets on $\partial_x T$ is a topology on $\partial T$ that, in fact, does not depend on $x$. For this topology, $\partial T$ is homeomorphic to a Cantor set. 

Now, the set $\overline{T} = T \cup \partial T$ can be endowed with a natural topology which makes $\overline{T}$ a compactification of $T$ and induces on $\partial T$ the topology defined above. This compactification has the important property that every isometry $\gamma$ of $T$ extends to a homeomorphism of $\overline{T}$, the restriction of which to $\partial T$ being denoted by $\partial \gamma$.

If $x,y,z \in T$, we denote $(y|z)_x := \frac{1}{2}\left(d(x,y) + d(x,z) - d(y,z)\right)$ and we call it the \textbf{Gromov product}. If $x \in T$, then $(.|.)_x$ can be extended in a continuous manner to $\overline{T}^2$, which we again call the Gromov product, and if we set, for all $b,b' \in \partial T$, $d_x(b,b') := e^{-(b|b')_x}$, then $d_x$ is a distance on $\partial T$ which also induces the topology defined above. However, the distances $d_x$ do depend on $x$, but in a conformal manner. That is, we have $$\lim_{b' \to b} \fra{d_y(b,b')}{d_x(b,b')} = e^{\beta_b(x,y)}$$ for a certain number $\beta_b(x,y)$ defined in the following way: if $r$ is a geodesic ray in $T$, then we denote by $\beta_r(x,y)$ the limit of $d(x,r(t)) - d(y,r(t))$ as $t$ tends to infinity, which exists and only depends on $r(+\infty)$. We call it the \textbf{horospheric distance} between $x$ and $y$ with respect to $r(+\infty)$. 

We can now define measures on $\partial X$. The Hausdorff dimension of $(\partial T, d_x)$ can easily be calculated, and is $\ln(d-1)$. Moreover, if we denote by $\mu_x$ the normalized $\ln(d-1)$-dimensional Hausdorff measure, isotropy around $x$ implies that $\mu_x(B) = \frac{1}{d(d-1)^{i-1}}$ if $B$ is a ball of radius $e^{-i}$ for the distance $d_x$. The map $$\begin{array}{rcl}\mu : T &\rightarrow &\cali{M}^1(\partial T)\\
x &\mapsto &\mu_x\\
\end{array}$$ is $\isom(T)$ equivariant in the sense that $\forall \gamma \in \isom(T)$, $(\partial \gamma)_* \mu_x = \mu_{\gamma(x)}$. Adding everything up, we get:

\begin{fact} The action $\isom(T) \curvearrowright (\partial T,\mu_{x_0})$ is a quasi-invariant action, and we have the formula \[\forall \gamma \in \isom(T),\ \  \forall b \in \partial T,\ \  \fra{\diff(\partial \gamma)_* \mu_{x_0}}{\diff\mu_{x_0}}(b) = (d-1)^{\beta_b(x_0,\gamma^{-1}(x_0))} =: c_T(\gamma,b).\]
\end{fact}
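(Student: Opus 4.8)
The plan is to prove quasi-invariance and the cocycle formula in one stroke, since once we exhibit a finite, strictly positive Radon--Nikodym density the preservation of $\mu_{x_0}$-null sets is automatic. Using the $\isom(T)$-equivariance $(\partial\gamma)_*\mu_{x_0}=\mu_{\gamma(x_0)}$ recorded just above the statement, the whole Fact reduces to the following purely geometric auxiliary computation: for all $x,y\in T$ and $\mu_x$-almost every $b\in\partial T$,
\[\fra{\diff\mu_y}{\diff\mu_x}(b)=(d-1)^{\beta_b(x,y)}.\]
Granting this, I would substitute the two visual measures produced by pushing $\mu_{x_0}$ forward along $\partial\gamma$ and rewrite the exponent using the $\isom(T)$-invariance of the horospheric distance; the precise appearance of $\gamma$ versus $\gamma^{-1}$ (and of the basepoint) in the final formula $c_T(\gamma,b)$ is then fixed by the pushforward convention adopted for the Radon--Nikodym cocycle.

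For the auxiliary formula I would avoid general conformal-density theory and use instead the elementary combinatorics of shadows, which matches the explicit ball-measure formula recalled in the text. Fix $b\in\partial T$, let $r$ be the geodesic ray from $x$ with $r(+\infty)=b$, put $v_n:=r(n)$, and let $\mathcal{O}(v_n)\subset\partial T$ be the shadow of $v_n$ seen from $x$, which is exactly a $d_x$-ball of radius $e^{-n}$ about $b$. These shadows are nested, shrink to $\{b\}$ and generate the Borel $\sigma$-algebra, so the derivative is the limit of the ratios $\mu_y(\mathcal{O}(v_n))/\mu_x(\mathcal{O}(v_n))$. By the isotropy formula, $\mu_x(\mathcal{O}(v_n))=\fra{1}{d(d-1)^{d(x,v_n)-1}}$; and once $n$ is large enough that $v_n$ lies on the geodesic from $y$ to every point of the shadow, the same set is a shadow seen from $y$, whence $\mu_y(\mathcal{O}(v_n))=\fra{1}{d(d-1)^{d(y,v_n)-1}}$. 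The ratio is therefore $(d-1)^{d(x,v_n)-d(y,v_n)}$, and since $v_n=r(n)$ we have $d(x,v_n)-d(y,v_n)\to\beta_b(x,y)$ by the very definition of the horospheric distance, giving $(d-1)^{\beta_b(x,y)}$. (Alternatively one runs the conformal route: $d_y/d_x\to e^{\beta_b(x,y)}$ pointwise, and the $\ln(d-1)$-dimensional Hausdorff measure scales by the $\ln(d-1)$-th power of the local conformal factor, i.e. by $(d-1)^{\beta_b(x,y)}$.)

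The main obstacle is the measure-differentiation step: justifying that the pointwise limit of the shadow-ratios really computes the Radon--Nikodym derivative. This is safe because the shadows along each ray form a nested, regular, $\sigma$-algebra-generating filtration of $\partial T$, so a martingale (or Besicovitch-type) differentiation theorem applies; one also checks the harmless bookkeeping point that for fixed $y$ the set $\mathcal{O}(v_n)$ is eventually a shadow seen from $y$ as well, which holds for all large $n$ because the rays $[x,b)$ and $[y,b)$ merge in the tree. Once this is in place the density is everywhere finite and strictly positive, so $\mu_{\gamma(x_0)}$ and $\mu_{x_0}$ are mutually absolutely continuous; in particular the action preserves $\mu_{x_0}$-null sets, which is the asserted quasi-invariance, and the explicit value of the density is the cocycle $c_T(\gamma,b)$, completing the proof.
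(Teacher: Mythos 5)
Your argument is correct, but it runs along a genuinely different track from the paper's. The paper does not really prove this Fact: it assembles it from the general theory of conformal densities on boundaries of $\cat$ spaces (citing Bourdon), the operative step being that the $\ln(d-1)$-dimensional Hausdorff measure scales under the conformal change $d_y/d_x \to e^{\beta_b(x,y)}$ by the $\ln(d-1)$-th power of the conformal factor --- this is exactly your parenthetical ``alternative route''. Your primary route instead computes the derivative by hand: the shadows $\mathcal{O}(v_n)$ along the ray $[x,b)$ are the $d_x$-balls, their $\mu_x$- and $\mu_y$-masses are given explicitly by the isotropy formula once $v_n$ lies past the point where $[x,b)$ and $[y,b)$ merge, and the ratio $(d-1)^{d(x,v_n)-d(y,v_n)}$ stabilizes at $(d-1)^{\beta_b(x,y)}$. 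This buys a self-contained, tree-specific proof at the price of having to invoke a differentiation theorem; the paper's route buys generality at the price of importing the conformal-density machinery. Two small points. First, the martingale/Besicovitch step a priori only identifies the limit of the shadow ratios with the density of the \emph{absolutely continuous part} of $\mu_y$ in its Lebesgue decomposition with respect to $\mu_x$; to get mutual absolute continuity (hence quasi-invariance) you should add the one-line reason the singular part vanishes --- either that your limit is finite at \emph{every} $b$ (whereas the singular part is carried by the set where the ratios blow up), or that $\int_{\partial T}(d-1)^{\beta_b(x,y)}\diff\mu_x(b)=1$, which follows from the same shadow decomposition used later in Proposition \ref{harishchandraarbre}. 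Second, you are right to flag the $\gamma$ versus $\gamma^{-1}$ bookkeeping: as literally printed, the left-hand side $(\partial\gamma)_*\mu_{x_0}=\mu_{\gamma(x_0)}$ would pair with $\beta_b(x_0,\gamma(x_0))$, while the exponent $\beta_b(x_0,\gamma^{-1}(x_0))$ is the one matching the paper's general convention $c(g,b)=\fra{\diff g^{-1}_*\nu}{\diff\nu}(b)$ and the one actually used in the subsequent computations; your derivation delivers whichever is required once that convention is fixed.
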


Let us recall that $G := \sln_2(\kk_0) \times \sln_2(\kk_\infty)$ acts componentwise on the product $T_0 \times T_\infty$ of the Bruhat-Tits trees, which are $(q+1)$-regular. Now, fix two vertices $v_0$ and $v_\infty$ in $T_0$ and $T_\infty$ and consider the product action $G \curvearrowright (\partial T_0 \times \partial T_\infty,\mu_{v_0} \otimes \mu_{v_\infty})$. 

From the above discussion, the following proposition is obvious.

\begin{prop}\label{prop: quasi-invariant and continuous cocycle}
In this setting, the product measure $\mu_{v_0}\otimes \mu_{v_\infty}$ is quasi-invariant under the action $G \curvearrowright \partial T_0 \times \partial T_\infty$, and the Radon-Nikodym cocycle is continuous.
\end{prop}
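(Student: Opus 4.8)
The plan is to reduce everything to the single-tree Fact recalled above, using only the product structure of the action and of the measure. Write $g = (\gamma_0, \gamma_\infty) \in G$ and $b = (b_0, b_\infty) \in \partial T_0 \times \partial T_\infty$; the action of $g$ is then by the product homeomorphism $\partial\gamma_0 \times \partial\gamma_\infty$. First I would record the elementary measure-theoretic fact that the pushforward of a product measure under a product map factorizes,
\[(\partial\gamma_0 \times \partial\gamma_\infty)_*(\mu_{v_0} \otimes \mu_{v_\infty}) = (\partial\gamma_0)_*\mu_{v_0} \otimes (\partial\gamma_\infty)_*\mu_{v_\infty},\]
which one checks on measurable rectangles and extends by the uniqueness of the product-measure construction (equivalently, by Fubini).

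By the Fact, each factor pushforward is absolutely continuous with respect to the corresponding factor measure, with continuous Radon--Nikodym derivative $c_{T_i}(\gamma_i, \cdot)$. Since the product of two absolutely continuous measures is absolutely continuous with respect to the product measure, with Radon--Nikodym derivative the product of the factor derivatives, I obtain
\[\frac{d(\partial\gamma_0 \times \partial\gamma_\infty)_*(\mu_{v_0}\otimes\mu_{v_\infty})}{d(\mu_{v_0}\otimes\mu_{v_\infty})}(b_0, b_\infty) = c_{T_0}(\gamma_0, b_0)\, c_{T_\infty}(\gamma_\infty, b_\infty).\]
In particular this derivative exists for every $g \in G$, so every $\nu$-null set is carried to a $\nu$-null set, which is precisely quasi-invariance; and it identifies the Radon--Nikodym cocycle of the product action (in the convention of the definition, after replacing $g$ by $g^{-1}$) as the product of the two single-tree cocycles.

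Continuity of the cocycle is then immediate: each $c_{T_i}$ is an explicit power of the horospheric distance $\beta_{b_i}(v_i, \gamma_i^{-1} v_i)$, hence continuous, and a product of continuous functions is continuous. The only point that is not completely formal is to confirm that the continuity obtained is of the kind required in Theorem \ref{theorem from continuous to discrete}, namely joint continuity in the group and boundary variables; this follows once one notes that $\beta_b(v, w)$ depends continuously on the boundary point $b$ and on the vertex $w = \gamma_i^{-1} v_i$. I expect this verification of joint continuity, rather than the factorization or the quasi-invariance, to be the only genuinely non-trivial ingredient.
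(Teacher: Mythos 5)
Your argument is correct and follows exactly the route the paper intends: the paper simply declares the proposition ``obvious'' from the preceding single-tree Fact, and your proof supplies the details of that reduction (factorization of the pushforward of a product measure, product of the Radon--Nikodym derivatives, and continuity of $(g,b)\mapsto c_{T_0}(\gamma_0,b_0)c_{T_\infty}(\gamma_\infty,b_\infty)$ via the horospheric distance). Your attention to the convention $g$ versus $g^{-1}$ and to joint continuity is a welcome bit of extra care, but there is no substantive difference from the paper's approach.
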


Let us denote $\pi$ the Koopman representation associated to this action, and $\Xi$ the Harish-Chandra function.

\subsection{Estimates on the growth of $\Gamma$} In this section, we provide the estimates on $\vert C^\Gamma_n \vert$ we need. To do so, it is useful to estimate the cardinal of sets of vertices inside balls in a product of two trees. We keep notation from Section \ref{defi}. 

We will now compute the number of elements of \[B_n := \left\{(x,y) \in V(T_0) \times V(T_\infty) \tq d_{\mathbb{I}}\left((v_0,v_\infty),(x,y)\right)\leq n \right\}.\] If $a \in \{0,\infty\}$, $i \in \nn$, $x \in T_a$, let us denote $S_a(x,i) := \{y \in T_a \tq d_a(x,y) = i\}$ and $B_a(x,i) := \cup_{j = 0,\dots,i} S_a(x,j)$.

\begin{lem}[Ball counting]\label{lemma: ball counting} There are $A,B,C \in \rr$ such that $A \not = 0$ and $$\forall n \in \nn,\ \ \vert B_n \vert = \left(An+B\right)(d-1)^n + C.$$
\end{lem}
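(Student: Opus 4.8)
The plan is to count the vertices in the ball $B_n$ of radius $n$ for the $L^1$ metric on the product $T_0 \times T_\infty$ by decomposing according to the distance travelled in each tree factor. First I would record the sphere counts in a single $(q+1)$-regular tree: writing $d = q+1$, we have $\vert S_a(x,0)\vert = 1$ and $\vert S_a(x,i)\vert = (q+1)(q)^{i-1} = d(d-1)^{i-1}$ for $i \geq 1$, since each vertex at distance $i \geq 1$ is reached by going out along one of $q+1$ initial edges and then $q$ choices at each subsequent step. Because the $L^1$-distance splits as $d_\ii((v_0,v_\infty),(x,y)) = d_0(v_0,x) + d_\infty(v_\infty,y)$, a vertex $(x,y)$ lies in $B_n$ exactly when $d_0(v_0,x) + d_\infty(v_\infty,y) \leq n$. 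I would therefore write
\[
\vert B_n \vert = \sum_{i+j \leq n} \vert S_0(v_0,i)\vert \, \vert S_\infty(v_\infty,j)\vert ,
\]
the sum ranging over $i,j \geq 0$ with $i + j \leq n$.

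Next I would evaluate this double sum. Grouping terms by the total $k = i + j$ (so $0 \leq k \leq n$), the contribution from each fixed $k$ is $\sum_{i+j=k} \vert S_0(v_0,i)\vert \, \vert S_\infty(v_\infty,j)\vert$, which is a convolution of the two (identical) sphere-count sequences. The generating function of the sequence $\bigl(\vert S_a(x,i)\vert\bigr)_{i \geq 0}$ is $1 + \sum_{i \geq 1} d(d-1)^{i-1} t^i = \frac{1 + t}{1 - (d-1)t}$, so the convolution has generating function $\left(\frac{1+t}{1-(d-1)t}\right)^2$. Extracting the coefficient of $t^k$ gives a closed form of the shape $(\alpha k + \beta)(d-1)^k + \gamma (-1)^k$ for constants depending only on $d$; summing these over $k = 0,\dots,n$ and using the formula for $\sum_{k} k x^k$ as well as the geometric sum then produces an expression of the announced form $(An + B)(d-1)^n + C$. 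The linear-in-$n$ coefficient $A$ arises precisely from the repeated root structure (the squared pole at $t = \frac{1}{d-1}$), which is why the leading behaviour is $n(d-1)^n$ rather than pure exponential; one checks $A \neq 0$ directly from the leading coefficient of the partial-fraction expansion.

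The computation is entirely elementary, so the only real care needed is bookkeeping: keeping track of the boundary term $i = 0$ or $j = 0$ (where the sphere count is $1$ rather than $d(d-1)^{i-1}$), and correctly collecting the geometric and arithmetico-geometric sums so that the $(d-1)^n$, $n(d-1)^n$, and constant contributions are separated cleanly. The step I expect to be the main obstacle, if any, is verifying that $A \neq 0$ after all cancellations — that is, confirming that the double pole genuinely contributes a nonvanishing linear term and is not annihilated by the numerator $(1+t)^2$; since $(1+t)^2$ does not vanish at $t = \frac{1}{d-1}$ for $d = q + 1 \geq 3$, this is immediate, and $A$ comes out to an explicit positive multiple of $\frac{1}{d-1}$. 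Rather than manipulate generating functions, one could equivalently carry out the two nested sums $\sum_{k=0}^{n} \sum_{i=0}^{k} (\cdots)$ by hand; I would present whichever route keeps the constants most transparent, and then simply read off $A$, $B$, $C$.
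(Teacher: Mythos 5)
Your decomposition is the same as the paper's: the paper writes $\vert B_n\vert = \sum_{i=0}^{n} s_i\,b_{n-i}$ with $s_i = \vert S_0(v_0,i)\vert$ and $b_j = \vert B_\infty(v_\infty,j)\vert$, which is exactly your double sum $\sum_{i+j\le n}\vert S_0(v_0,i)\vert\,\vert S_\infty(v_\infty,j)\vert$ with the inner sum carried out first. The only real difference is the evaluation: the paper substitutes the explicit formulas $s_i = d(d-1)^{i-1}$ and $b_j = 1 + \frac{d}{d-2}\bigl((d-1)^j-1\bigr)$ and sums directly, obtaining $A = \frac{d^2}{(d-2)(d-1)}$, $B=\frac{d(d-4)}{(d-2)^2}$, $C = 1-B$ explicitly, whereas you package the computation into the generating function $\frac{(1+t)^2}{(1-t)(1-(d-1)t)^2}$ and read off the asymptotic shape from the pole structure. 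Both routes work, and your observation that $A\neq 0$ because the numerator $(1+t)^2$ does not vanish at the double pole $t=\frac{1}{d-1}$ is a clean way to get the nonvanishing without computing $A$; the lemma only asserts existence of such constants, so leaving them implicit is acceptable provided the partial fractions are actually carried out.

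One intermediate claim is wrong, though harmlessly so. The coefficient of $t^k$ in $\left(\frac{1+t}{1-(d-1)t}\right)^2$ is \emph{not} of the form $(\alpha k+\beta)(d-1)^k + \gamma(-1)^k$: the factor $1+t$ sits in the numerator, so there is no pole at $t=-1$ and hence no $(-1)^k$ contribution. The correct form (for $k\ge 1$) is $(\alpha k+\beta)(d-1)^k$ with $\alpha = \frac{d^2}{(d-1)^2}$ and $\beta = \frac{d(d-2)}{(d-1)^2}$, plus a correction at $k=0$ only. This matters for internal consistency: if a genuine term $\gamma(-1)^k$ with $\gamma\neq 0$ were present, then $\sum_{k=0}^{n}\gamma(-1)^k = \gamma\,\frac{1+(-1)^n}{2}$ would make the ``constant'' $C$ depend on the parity of $n$, contradicting the statement you are proving. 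Since the term is in fact absent, your argument goes through once this is corrected; just delete the $(-1)^k$ term (or verify $\gamma=0$) before summing over $k$.
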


\begin{proof} To make the calculation more readable, we set $D := \frac{d}{d-2}$. We first observe that $$B_n = \bigsqcup^n_{i = 0} \bigsqcup_{x \in S_0(v_0,i)} \{x\}\times B_\infty(v_\infty,n-i)$$ so that $\vert B_n \vert = \sum^n_{i=0} s_i b_{n-i}$ where we denote, for $i,j \in \nn$, $s_i := \vert S_0(v_0,i)\vert$ and $b_{j} := \vert B_\infty(v_\infty,j)\vert$.

We have that $\forall i,j \in \nn$, \[\begin{gathered}
\forall i \in \nn,\ \ s_i = \left\{\begin{array}{rl} d(d-1)^{i-1} &\mbox{if } i \geq 1\\1 &\mbox{if }i = 0\\
\end{array}\right. \\
\forall j \in \nn,\ \ b_j = \sum^j_{i = 0} s_i = 1 + D\left((d-1)^j - 1\right)\\ 
\forall i \geq 1,\ \ s_i b_{n-i} = D\left(d(d-1)^{n-1} - 2(d-1)^{i-1}\right)\end{gathered}\] and we get, $\forall n \in \nn$, \[\begin{array}{rcl}
\vert B_n \vert = \sum^n_{i = 0} s_i b_{n-i} &= &\displaystyle b_n + \sum^n_{i = 1} s_i b_{n-i}\\ 
&= &\displaystyle 1 + D((d-1)^n -1) + \sum^n_{i=1} D \left[d(d-1)^{n-1} - 2 (d-1)^{i-1}\right]\\
&= &\displaystyle 1 - D + D(d-1)^n + Ddn(d-1)^{n-1} - 2D \sum^n_{i= 1} (d-1)^{i-1}\\
&= &\displaystyle 1 - D + D(d-1)^n + Ddn(d-1)^{n-1} - \frac{2D}{d-2} (d-1)^n + \frac{2D}{d-2}\\
&= &\displaystyle (d-1)^n \left[n\frac{Dd}{d-1} + D - \frac{2D}{d-2}\right] + 1 - D + \frac{2D}{d-2}\\
\end{array}\]

so we choose \[\begin{gathered}
A := \frac{Dd}{d-1} = \frac{d^2}{(d-2)(d-1)}\\
B := D - \frac{2D}{d-2} = \frac{d(d-4)}{(d-2)^2}\\
C := 1 - D + \frac{2D}{d-2} = 1 - B
\end{gathered}
.\]
\end{proof}

\begin{prop}\label{groupgrowth} We have that \[\mu(G_n) \ll n(d-1)^n.\]
\end{prop}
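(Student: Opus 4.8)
We want to show that $\mu(G_n) \ll n(d-1)^n$, where $G_n = \{g \in G : L(g) = n\}$ is the sphere of radius $n$ in $G = \sln_2(\kk_0)\times\sln_2(\kk_\infty)$ for the $L^1$-type length function $L = L_0 + L_\infty$, and $d = q+1$ is the regularity of the Bruhat–Tits trees. Let me sketch my plan.

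Let me think carefully about the structure here. We have $G = \sln_2(\kk_0) \times \sln_2(\kk_\infty)$, and the length function is $L(g_0, g_\infty) = d_0(v_0, g_0 v_0) + d_\infty(v_\infty, g_\infty v_\infty)$. The key point is that the action of $\sln_2(\kk_a)$ on its tree $T_a$ is proper, and the stabilizer of the base vertex $v_a$ is a maximal compact subgroup $K_a$. So the Haar measure of $G$ "lives over" the orbit of $(v_0, v_\infty)$ in $V(T_0) \times V(T_\infty)$.

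The strategy: I want to relate $\mu(G_n)$ to the combinatorial ball-counting quantity from Lemma \ref{lemma: ball counting}. Specifically, the map $g \mapsto g\cdot(v_0,v_\infty)$ sends $G$ to the orbit of $(v_0,v_\infty)$ in the product of trees. Since $\sln_2(\kk_a)$ acts transitively on vertices of $T_a$ (this holds because the tree is the full Bruhat–Tits tree and $\sln_2$ acts transitively on vertices — more precisely on vertices of a fixed type, but since $-I$ or elements swapping types may be needed, one checks transitivity on $V(T_a)$ itself, which is standard), the orbit is all of $V(T_0)\times V(T_\infty)$. The fibers of this map are the cosets of the stabilizer $K_0 \times K_\infty$, which is compact and open (since the trees are locally finite and the action is proper and the base vertex stabilizer is a maximal compact open subgroup). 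So I normalize the Haar measure so that $\mu(K_0 \times K_\infty) = 1$.

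Let me make the plan concrete. Define $\mathrm{Stab} = K_0 \times K_\infty$ where $K_a = \mathrm{Stab}_{\sln_2(\kk_a)}(v_a)$. Then $g \in G_n$ iff $d_\ii((v_0,v_\infty), g(v_0,v_\infty)) = n$, i.e. iff $g(v_0,v_\infty)$ lies in the sphere $\{(x,y) : d_\ii((v_0,v_\infty),(x,y)) = n\}$. Decomposing $G_n$ into fibers over this sphere, each fiber being a single coset of $\mathrm{Stab}$ of measure $\mu(\mathrm{Stab})$, I get
\[
\mu(G_n) = \mu(\mathrm{Stab}) \cdot \#\{(x,y) \in V(T_0)\times V(T_\infty) : d_\ii((v_0,v_\infty),(x,y)) = n\}.
\]
Actually, here I must be careful: the fiber over a point $(x,y)$ in the orbit is a single coset of the stabilizer of $(x,y)$, which is conjugate to $\mathrm{Stab}$ and hence (by invariance of Haar measure, or since all these stabilizers are maximal compact with the same measure) has the same measure $\mu(\mathrm{Stab})$. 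So the displayed formula holds up to the constant $\mu(\mathrm{Stab})$.

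**Reducing to ball-counting.**

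Now I recognize that the sphere count $\#\{d_\ii = n\}$ equals $|B_n| - |B_{n-1}|$ in the notation of Lemma \ref{lemma: ball counting}. By that lemma, $|B_n| = (An+B)(d-1)^n + C$ with $A \neq 0$, so
\[
|B_n| - |B_{n-1}| = (An+B)(d-1)^n - (A(n-1)+B)(d-1)^{n-1} + 0,
\]
which, after factoring out $(d-1)^{n-1}$, is a linear-in-$n$ multiple of $(d-1)^{n-1}$; hence it is $\ll n(d-1)^n$. Combining with the previous displayed formula and absorbing $\mu(\mathrm{Stab})$ and the factor $(d-1)^{-1}$ into the implied constant gives $\mu(G_n) \ll n(d-1)^n$, which is exactly the claim.

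**Main obstacle.**

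The routine parts are the arithmetic (differencing the ball-count formula) and the normalization of Haar measure. The genuine point requiring care — and the step I'd expect to be the main obstacle — is the reduction identity $\mu(G_n) = \mu(\mathrm{Stab}) \cdot \#\{d_\ii = n\}$. This rests on two facts that should be stated cleanly: first, that the vertex stabilizers $K_a$ are compact \emph{and open} in $\sln_2(\kk_a)$ (so that they have finite, positive Haar measure and the fibers partition $G_n$ into finitely many full cosets), which follows from local finiteness of the tree together with properness of the action; and second, that $\sln_2(\kk_a)$ acts transitively on $V(T_a)$, so that the orbit map is onto $V(T_0)\times V(T_\infty)$ and the sphere count is the correct combinatorial quantity. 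I would make these two facts explicit (citing \cite{SERRE} for the tree action), after which the measure-theoretic bookkeeping is immediate and the polynomial-times-exponential bound drops out of Lemma \ref{lemma: ball counting} by a one-line difference computation.
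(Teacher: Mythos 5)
Your argument is essentially the paper's: both push the Haar measure forward along the orbit map $\theta : g \mapsto g(v_0,v_\infty)$, observe that each nonempty fibre is a left coset of the compact open stabilizer $G_0 = K_0\times K_\infty$ and hence has measure $\mu_G(G_0)$, and then invoke Lemma \ref{lemma: ball counting}. Two small corrections: in the paper's notation $G_n = L^{-1}([0,n])$ is the \emph{ball}, not the sphere, so the relevant count is $\vert B_n\vert$ itself rather than $\vert B_n\vert - \vert B_{n-1}\vert$ (either way the bound is $\ll n(d-1)^n$); and $\sln_2(\kk_a)$ does \emph{not} act transitively on $V(T_a)$ --- it preserves the type bipartition of the Bruhat--Tits tree --- so your displayed identity should only be the inequality $\mu(G_n) \leq \mu_G(G_0)\cdot\vert B_n\vert$ coming from the fact that some fibres may be empty, which is all the upper bound requires (and is how the paper states it).
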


\begin{proof} Consider the map \[\begin{array}{rcl}
\theta : G &\rightarrow &T_0 \times T_\infty\\
g &\mapsto &g(v_0,v_\infty).\\
\end{array}\]Then \[G_n = \theta^{-1}(B_n) = \sqcup_{y \in B_n} \theta^{-1}(\{y\}).\]Each of these fibers, if it is nonempty, is a $G_0$-left coset, so it has measure $\mu_G(G_0)$. So, according to Lemma \ref{lemma: ball counting}, \[\mu_G(G_n) \leq \vert B_n \vert \mu_G(G_0) \ll n(d-1)^n.\]
\end{proof}

\begin{prop} \label{growthestimates} \label{estimatesrecall}There is $c$ such that for all $n \in \nn^*$, \[\mu\left(C^G_n\right) \leq c \left\vert C^\Gamma_n \right\vert.\]
\end{prop}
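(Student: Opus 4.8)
The plan is to bound the two sides separately, each by a quantity of order $n(d-1)^n = nq^n$ (the trees being $(q+1)$-regular, so $d-1 = q$). First a reduction: since $\sln_2$ is type-preserving on each Bruhat--Tits tree, the vertices $v_0$ and $g_0v_0$ carry the same type, so $d_0(v_0,g_0v_0)$ is always even, and likewise at the place $\infty$; hence $L$ takes only even values on $G$ and on $\Gamma$. For odd $n$ both $C^G_n$ and $C^\Gamma_n$ are empty and the inequality is trivial, so I may assume $n = 2m$. The upper bound is then immediate from Proposition \ref{groupgrowth}: since $C^G_n \subseteq G_n$, monotonicity of $\mu$ gives $\mu(C^G_n) \leq \mu(G_n) \ll n(d-1)^n$.

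The heart of the matter is the matching lower bound $\vert C^\Gamma_n\vert \gg n(d-1)^n$, i.e. $\vert C^\Gamma_{2m}\vert \gg m\,q^{2m}$ (using $\Gamma \cong \sln_2(A)$, so that $\vert C^\Gamma_n\vert$ literally counts matrices $M \in \sln_2(A)$ with $L(M)=n$). The key structural observation is a decoupling of the two places. Writing $\mathcal{O}_0 = \{F : v_0(F)\geq 0\}$ and $\mathcal{O}_\infty = \{F : v_\infty(F)\geq 0\}$ for the valuation rings of $\kk_0,\kk_\infty$, one has $\ff_q[X] \subset \mathcal{O}_0$ and $\ff_q[X^{-1}] \subset \mathcal{O}_\infty$ (a polynomial in $X$ is integral at $0$, a polynomial in $X^{-1}$ is integral at $\infty$). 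Consequently $\sln_2(\ff_q[X])$ fixes $v_0$ and acts only on $T_\infty$, while $\sln_2(\ff_q[X^{-1}])$ fixes $v_\infty$ and acts only on $T_0$. I would then check that for $M_0 \in \sln_2(\ff_q[X^{-1}])$ with $d_0(v_0,M_0v_0) = 2p$ and $M_\infty \in \sln_2(\ff_q[X])$ with $d_\infty(v_\infty,M_\infty v_\infty) = 2r$, the product $M := M_0M_\infty$ satisfies $L(M) = 2p + 2r$ \emph{exactly}: indeed $Mv_0 = M_0v_0$ because $M_\infty$ fixes $v_0$, while $d_\infty(v_\infty,Mv_\infty) = d_\infty(v_\infty,M_\infty v_\infty) = 2r$ because $M_0$ is an isometry of $T_\infty$ fixing $v_\infty$.

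It then remains to count. For each fixed $r \geq 0$ I claim $\vert\{M_\infty \in \sln_2(\ff_q[X]) : d_\infty(v_\infty,M_\infty v_\infty) = 2r\}\vert \gg q^{2r}$, and symmetrically for $\sln_2(\ff_q[X^{-1}])$ acting on $T_0$. Since the tree distance is computed from the Cartan decomposition by $d_\infty(v_\infty,M_\infty v_\infty) = 2\max_{i,j}\deg\,(M_\infty)_{ij}$, this is the number of matrices in $\sln_2(\ff_q[X])$ whose entries have degree $\leq r$, with equality for some entry; parametrising by a coprime bottom row $(c,d)$ of degree $\leq r$ and the resulting affine family of top rows, one finds $\asymp q^{2r+3}$ such matrices, in particular $\gg q^{2r}$. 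Fixing a splitting $p+r = m$ and multiplying the two families, the map $(M_0,M_\infty)\mapsto M_0M_\infty$ has fibres contained in single cosets of the finite group $\sln_2(\ff_q[X])\cap\sln_2(\ff_q[X^{-1}]) = \sln_2(\ff_q)$, hence is at most $\vert\sln_2(\ff_q)\vert$-to-one and lands in $C^\Gamma_{2m}$. Summing over the $m+1$ splittings gives $\vert C^\Gamma_{2m}\vert \gg \frac{1}{\vert\sln_2(\ff_q)\vert}\sum_{p+r=m} q^{2p}q^{2r} \gg m\,q^{2m}$, and combining with the upper bound yields a constant $c$ with $\mu(C^G_n)\leq c\,\vert C^\Gamma_n\vert$.

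I expect the main obstacle to be the single-place matrix count $\gg q^{2r}$: although it is ``only'' a count of bounded-degree $\sln_2$-matrices over $\ff_q[X]$ (equivalently of coprime polynomial pairs), making the lower bound uniform in $r$ and clean at the degenerate ends $p=0$ or $r=0$ (where the relevant family collapses to $\sln_2(\ff_q)$) requires some bookkeeping. By contrast, the decoupling identity $L(M_0M_\infty)=2p+2r$ and the even-valuedness of $L$ are the conceptual ingredients that reduce the product-of-trees estimate to one place at a time and make the elementary counting suffice.
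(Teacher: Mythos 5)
Your proposal is correct in substance, but it takes a genuinely different route from the paper. The paper's own proof is two lines of soft ergodic theory: since $\Gamma$ is an irreducible lattice, the action $G \curvearrowright G/\Gamma$ is mixing, the mean ergodic theorem holds, and the statement is read off from the lattice-point-counting lemma of Gorodnik and Nevo \cite{GORNEV}. You instead establish the matching lower bound $\vert C^\Gamma_{2m}\vert \gg m\,q^{2m}$ by direct arithmetic: the parity reduction via type-preservation, the decoupling $L(M_0M_\infty) = L_0(M_0) + L_\infty(M_\infty)$ for $M_0 \in \sln_2(\ff_q[X^{-1}])$ and $M_\infty \in \sln_2(\ff_q[X])$, the single-place count of degree-$\leq r$ unimodular matrices via coprime bottom rows (a particular top row of degree $<r$ exists by division, so the count $\gg q^{2r+2}$ goes through, and the degenerate ends $p=0$ or $r=0$ still contribute at least $q^{2m}$), the $\vert\sln_2(\ff_q)\vert$-to-one bound on the product map, and the disjointness of the $m+1$ splittings (forced since $L_0$ and $L_\infty$ of the product are determined) which produces exactly the linear factor matching the $n(d-1)^n$ upper bound from Proposition \ref{groupgrowth}. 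What your approach buys is a self-contained, effective, elementary proof very much in the spirit the paper advertises; what it costs is that it is tied to the specific arithmetic of $\ff_q[X,X^{-1}]$ and to taking $v_0, v_\infty$ to be the standard vertices stabilized by $\sln_2(\mathcal{O}_0)$ and $\sln_2(\mathcal{O}_\infty)$, whereas the ergodic argument works for any irreducible lattice. One sign to fix: with the paper's convention $v_\infty(P/Q) = \deg P - \deg Q$, the valuation ring at infinity is $\{F : v_\infty(F) \leq 0\}$, which is the set containing $\ff_q[X^{-1}]$; your inequality is reversed, though the intended fact (that $\sln_2(\ff_q[X^{-1}])$ fixes $v_\infty$) is correct.
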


\begin{proof} Let us recall that $\Gamma$ is an irreducible lattice in $G$, according to \cite[p. 1]{MARGU}. Since the action of $G$ on $G/\Gamma$ is mixing, the mean ergodic theorem holds, and therefore, we can apply \cite[Lemma 6.7, p. 79]{GORNEV}.
\end{proof}

\subsection{Estimates on the Harish-Chandra function}

In order to apply the criterion, we need to compute the Harish-Chandra function associated to the quasi-invariant action $\Gamma \curvearrowright \partial T_0 \times \partial T_\infty$. Since this is a product action, is is enough to calculate the Harish-Chandra functions on the factors:

\begin{lem} \label{productaction}The Harish-Chandra of a product of actions is the product of the Harish-Chandra functions on the factors. In particular, \[\forall (g_0,g_\infty) \in G,\ \ \Xi(g_0,g_\infty) = \int_{\partial T_0} c_{T_0}(g^{-1}_0,b)^{\frac{1}{2}}\diff \mu_{v_0}(b)\int_{\partial T_\infty} c_{T_\infty}(g^{-1}_\infty,b)^{\frac{1}{2}}\diff \mu_{v_\infty}(b).\]
\end{lem}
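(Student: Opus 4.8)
The plan is to reduce the claim to two standard measure-theoretic facts: the factorization of the Radon-Nikodym cocycle of a product action, and Fubini's theorem. First I would record that the action of $G = \sln_2(\kk_0) \times \sln_2(\kk_\infty)$ on $B = \partial T_0 \times \partial T_\infty$ is componentwise, so that $g = (g_0,g_\infty)$ sends $(b_0,b_\infty)$ to $(\partial g_0(b_0), \partial g_\infty(b_\infty))$. Consequently the pushforward of the product measure $\nu := \mu_{v_0} \otimes \mu_{v_\infty}$ under $g$ is itself a product measure, namely $g_*\nu = (g_0)_*\mu_{v_0} \otimes (g_\infty)_*\mu_{v_\infty}$, since pushing a product measure forward by a product map yields the product of the pushforwards.

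The key step is then to show that the Radon-Nikodym cocycle of the product action factors as the product of the cocycles on the two factors, that is, \[c(g^{-1},(b_0,b_\infty)) = c_{T_0}(g_0^{-1},b_0)\,c_{T_\infty}(g_\infty^{-1},b_\infty).\] This rests on the general fact that a Radon-Nikodym derivative of a product of measures against another product factors: if $\lambda_0 \ll \mu_0$ and $\lambda_\infty \ll \mu_\infty$, then $\lambda_0\otimes\lambda_\infty \ll \mu_0\otimes\mu_\infty$ and $\fra{\diff(\lambda_0\otimes\lambda_\infty)}{\diff(\mu_0\otimes\mu_\infty)}(b_0,b_\infty) = \fra{\diff\lambda_0}{\diff\mu_0}(b_0)\,\fra{\diff\lambda_\infty}{\diff\mu_\infty}(b_\infty)$. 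Taking $\lambda_i = (g_i)_*\mu_{v_i}$ and recalling the definition $c(g^{-1},b) = \fra{\diff g_*\nu}{\diff\nu}(b)$ produces the displayed factorization, and the factorization of the cocycle of $T_i$ into the expression $c_{T_i}(g_i^{-1},b_i)$ is exactly the formula recorded in the Fact of Subsection~\ref{subsubsection: A useful unitary representation}.

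Finally, I would take square roots of the factored cocycle and integrate against $\nu$, invoking Fubini's theorem. The theorem applies because the integrand $c_{T_0}(g_0^{-1},b_0)^{1/2}\,c_{T_\infty}(g_\infty^{-1},b_\infty)^{1/2}$ is a nonnegative product with respect to the product measure $\nu = \mu_{v_0}\otimes\mu_{v_\infty}$, giving \[\Xi(g) = \int_B c(g^{-1},b)^{1/2}\diff\nu(b) = \left(\int_{\partial T_0} c_{T_0}(g_0^{-1},b_0)^{1/2}\diff\mu_{v_0}(b_0)\right)\left(\int_{\partial T_\infty} c_{T_\infty}(g_\infty^{-1},b_\infty)^{1/2}\diff\mu_{v_\infty}(b_\infty)\right),\] which is the asserted identity. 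The only genuine point requiring care is the cocycle factorization; once it is in place, the remainder is a direct application of Fubini, so I do not expect a serious obstacle beyond checking that the elementary product-measure Radon-Nikodym identity indeed holds in this non-invariant setting.
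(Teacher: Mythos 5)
Your proposal is correct and follows the same route as the paper, whose entire proof is the single line ``Just apply Fubini's theorem''; you have simply made explicit the implicit step, namely that the Radon--Nikodym cocycle of the product action factors as the product of the cocycles on the two factors. That added detail is welcome rather than a divergence, so there is nothing further to compare.
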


\begin{proof} Just apply Fubini's theorem.
\end{proof}

Our goal is now to compute $\int_{\partial T} c_T(\gamma^{-1},b)^{\frac{1}{2}} \diff \mu_{x_0}(b)$ for $T$ a $d$-regular tree, $\gamma \in \isom(T)$, $x_0$ a vertex of $T$ and $\mu_{x_0}$ the boundary measure on $\partial T$ associated to $x_0$. As we shall see, $b \mapsto c_T(\gamma,b)$ is piecewise constant, so we will suitably partition $\partial T$.

Let us define $S_{n,\gamma} := \{y \in T_d \tq d(x_0,y) = n,\ \  l([x_0,y] \cap [x_0,\gamma^{-1}(x_0)]) = l([x_0,y]) - 1\}$, and, for $y \in T$, $\cali{O}_y := \{\xi \in \partial T \tq y \in [x_0,\xi)\}$.

\begin{lem} With the above notation, let $n := d(x_0,\gamma^{-1}(x_0))$. The following properties hold true.
\begin{enumerate}
\item Assume $i < n$, $y \in S_{i,\gamma}$, $b \in \cali{O}_y$, $b' \in \partial T$. Then $$b' \in \cali{O}_y \Leftrightarrow (b\vert b')_{x_0} > i-1 \Leftrightarrow d_{x_0}(b,b') < e^{-i+1}.$$
\item $\partial T = \cali{O}_{\gamma^{-1}(x_0)} \sqcup \bigsqcup^n_{i=1} \left(\bigsqcup_{y \in S_{i,\gamma}} \cali{O}_y\right)$,
\item $\forall i \in \{2,...,n\}$, $\vert S_{i,\gamma(x_0)} \vert = d-2$, and $\vert S_{1,\gamma(x_0)} \vert = d-1$,
\item $\forall i \in \{1,...,n\}$, $\forall y \in S_{i,\gamma(x_0)}$, $\forall \xi \in \cali{O}_y$, $\beta_{\xi}(x_0,\gamma^{-1}(x_0)) = 2(i-1) - n$,
\item $\forall \xi \in \cali{O}_{\gamma^{-1}(x_0)}$, $\beta_{\xi}(x_0,\gamma^{-1}(x_0)) = n$.
\end{enumerate}
\end{lem}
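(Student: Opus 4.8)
The plan is to reduce every assertion to the elementary geometry of the finite geodesic segment $[x_0,\gamma^{-1}(x_0)]$. Write this segment as $x_0 = p_0, p_1, \dots, p_n = \gamma^{-1}(x_0)$, so that $p_k$ is the vertex at distance $k$ from $x_0$ along it. The defining condition of $S_{i,\gamma}$ says exactly that a vertex $y$ at distance $i$ lies in $S_{i,\gamma}$ if and only if the geodesic $[x_0,y]$ runs along $p_0, \dots, p_{i-1}$ and then leaves the segment through its last edge; in other words, $S_{i,\gamma}$ is the set of vertices at which a ray issued from $x_0$ can first branch off $[x_0,\gamma^{-1}(x_0)]$ at level $i$. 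This single picture --- a ray $[x_0,\xi)$ follows the segment until it either exhausts it (reaching $\gamma^{-1}(x_0)$) or peels off at a uniquely determined first branching vertex --- underlies all five claims.

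First I would prove (1), which is purely a matter of unwinding definitions. Since $b \in \cali{O}_y$, the ray $[x_0,b)$ passes through $y$, a vertex at distance $i$; hence for any $b'$ the condition $b' \in \cali{O}_y$ is equivalent to $[x_0,b')$ sharing its first $i$ edges with $[x_0,b)$, i.e.\ to the maximal common initial segment of the two rays having length at least $i$. By definition of the Gromov product on a tree this length is $(b\vert b')_{x_0}$, so $b'\in\cali{O}_y \Leftrightarrow (b\vert b')_{x_0} \ge i \Leftrightarrow (b\vert b')_{x_0} > i-1$, and the last equivalence with $d_{x_0}(b,b') < e^{-i+1}$ is just the definition $d_{x_0}(b,b') = e^{-(b\vert b')_{x_0}}$. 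For (2) I would run the branching dichotomy above: every $\xi \in \partial T$ either has $\gamma^{-1}(x_0) \in [x_0,\xi)$, giving $\xi \in \cali{O}_{\gamma^{-1}(x_0)}$, or else $[x_0,\xi)$ leaves the segment at a unique level $i \in \{1,\dots,n\}$ through a unique vertex $y \in S_{i,\gamma}$, giving $\xi \in \cali{O}_y$; this produces the cover. Disjointness follows because the vertices $\gamma^{-1}(x_0)$ and $\{y : y \in S_{i,\gamma},\ 1 \le i \le n\}$ are pairwise incomparable for the relation ``lies on the geodesic from $x_0$ to'', and $\cali{O}_y \cap \cali{O}_{y'} = \emptyset$ whenever $y,y'$ are incomparable.

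Claim (3) is a neighbor count: a vertex of $S_{i,\gamma}$ is obtained by leaving $p_{i-1}$ through an edge other than the one toward $p_i$ and (when $i \ge 2$) other than the one back toward $p_{i-2}$; since $p_{i-1}$ has $d$ neighbors, this leaves $d-2$ choices when $2 \le i \le n$ and $d-1$ choices when $i = 1$, there being no backward neighbor at $x_0 = p_0$. For (4) and (5) I would compute the horospheric distance $\beta_\xi(x_0,\gamma^{-1}(x_0)) = \lim_{t\to\infty}(d(x_0,r(t)) - d(\gamma^{-1}(x_0),r(t)))$ directly, where $r$ is the ray to $\xi$. In case (5), $\xi\in\cali{O}_{\gamma^{-1}(x_0)}$ means $r$ passes through $\gamma^{-1}(x_0)=p_n$ at time $n$, so $d(\gamma^{-1}(x_0),r(t)) = t-n$ and the limit is $n$. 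In case (4), $r$ leaves the segment at $p_{i-1}$, so the geodesic from $\gamma^{-1}(x_0)=p_n$ to $r(t)$ passes through the confluence vertex $p_{i-1}$, whence $d(\gamma^{-1}(x_0),r(t)) = (n-i+1)+(t-i+1)$ and the limit is $2(i-1)-n$.

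The computations are elementary, so the only real work is organizational. The point demanding the most care is the disjoint-cover assertion (2): one must check both that the branching level and branching vertex of a ray are genuinely unique, and that $\gamma^{-1}(x_0)$ is to be treated as a single set $\cali{O}_{\gamma^{-1}(x_0)}$ rather than subdivided --- the vertices at distance $n+1$ branching past $p_n$ are exactly what is absorbed into $\cali{O}_{\gamma^{-1}(x_0)}$. The second mildly delicate point is the consistent handling of the degenerate level $i=1$, where there is no backward neighbor $p_{i-2}$; this is what accounts for the exceptional count $\vert S_{1,\gamma}\vert = d-1$ in (3). Neither presents a genuine obstacle.
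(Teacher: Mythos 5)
Your proof is correct and follows essentially the same route as the paper's (which is much terser, dismissing (3)--(5) as straightforward): unwinding the Gromov product as the length of the common initial segment for (1), the maximal-time-on-the-segment branching dichotomy for (2), and direct neighbor counts and confluence-point computations for (3)--(5). The extra care you take with disjointness in (2) and with the degenerate level $i=1$ is exactly where the paper leaves details to the reader, and your computations check out against the values used later in the Harish--Chandra integral.
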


\begin{proof}
\begin{enumerate}
\item We have that $b'\not\in\cali{O}(y)$ if and only if $l([x_0,b') \cap [x_0,b)) \leq l([x_0,y]) - 1 = i - 1$, which proves $(1)$.
\item The sets in the union are clearly disjoint, so it is enough to show that $\partial T$ is the mentioned union. Let $b \in \partial T$, and $r : \rr_+ \rightarrow T$ be the geodesic joining $x_0$ to $b$. Let $$t := \max\{t \in \rr_+ \tq r(t) \in [x_0,\gamma^{-1}(x_0)]\}.$$ Then $y := r(t+1) \in S_{t+1,\gamma}$ and $b \in \cali{O}_y$.
\item 4) and 5) are straightforward.
\end{enumerate}
\end{proof}

\begin{prop}\label{harishchandraarbre} Let $\gamma \in \isom(T_d)$ and denote $n := d(x_0,\gamma(x_0))$. Let $q=d-1$. We have $$\displaystyle\int_{\partial T} c(\gamma,b)^{\frac{1}{2}} \diff\mu(b) = \left(1 + \frac{q-1}{q+1}\mbox{ }n\right)q^{-\frac{n}{2}}.$$
\end{prop}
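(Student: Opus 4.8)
The plan is to compute the integral $\int_{\partial T} c(\gamma,b)^{1/2}\diff\mu(b)$ by exploiting the partition of $\partial T$ established in the previous lemma, on each piece of which the Radon-Nikodym cocycle $c(\gamma,b)=(d-1)^{\beta_b(x_0,\gamma^{-1}(x_0))}$ takes a constant value. Recall from the Fact in Section \ref{subsubsection: A useful unitary representation} that $c_T(\gamma,b)=(d-1)^{\beta_b(x_0,\gamma^{-1}(x_0))}$, so the integrand $c(\gamma,b)^{1/2}=q^{\beta_b(x_0,\gamma^{-1}(x_0))/2}$ is piecewise constant, with the pieces being exactly the shadows $\cali{O}_y$ for $y \in S_{i,\gamma}$ ($i=1,\dots,n$) together with $\cali{O}_{\gamma^{-1}(x_0)}$. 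The whole computation thus reduces to three easy inputs already recorded: the value of the Busemann cocycle $\beta_\xi(x_0,\gamma^{-1}(x_0))$ on each piece (parts (4) and (5) of the lemma), the cardinalities $\vert S_{i,\gamma}\vert$ (part (3)), and the $\mu_{x_0}$-measure of a single shadow $\cali{O}_y$ when $d(x_0,y)=i$.

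First I would record the measure of a shadow. By part (1) of the lemma, for $y\in S_{i,\gamma}$ the shadow $\cali{O}_y$ is exactly the ball of radius $e^{-i+1}$ for the visual metric $d_{x_0}$; hence by the isotropy computation in Section \ref{subsubsection: A useful unitary representation} (where a ball of radius $e^{-j}$ has measure $\frac{1}{d(d-1)^{j-1}}$, taking $j=i-1$) one gets $\mu_{x_0}(\cali{O}_y)=\frac{1}{d(d-1)^{i-2}}=\frac{1}{dq^{i-2}}$ for $i\ge 1$; similarly the complementary shadow $\cali{O}_{\gamma^{-1}(x_0)}$ has measure $\frac{1}{dq^{n-1}}$. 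Next, on $\cali{O}_y$ with $y\in S_{i,\gamma}$ we have $\beta_\xi=2(i-1)-n$ by part (4), so $c(\gamma,\cdot)^{1/2}=q^{(2(i-1)-n)/2}=q^{i-1-n/2}$, while on $\cali{O}_{\gamma^{-1}(x_0)}$ part (5) gives $\beta_\xi=n$ and hence $c(\gamma,\cdot)^{1/2}=q^{n/2}$.

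Assembling these, the integral becomes a single finite sum:
\[
\int_{\partial T} c(\gamma,b)^{1/2}\diff\mu(b)
= q^{n/2}\,\mu_{x_0}\!\bigl(\cali{O}_{\gamma^{-1}(x_0)}\bigr)
+ \sum_{i=1}^{n} \vert S_{i,\gamma}\vert\, q^{\,i-1-n/2}\,\frac{1}{d\,q^{i-2}}.
\]
Plugging in $\mu_{x_0}(\cali{O}_{\gamma^{-1}(x_0)})=\frac{1}{dq^{n-1}}$, together with $\vert S_{1,\gamma}\vert=d-1=q$ and $\vert S_{i,\gamma}\vert=d-2=q-1$ for $2\le i\le n$, I would simplify each summand; the crucial point is that the factors $q^{i}$ cancel against $q^{i-2}$ so that every term of the sum carries the common factor $q^{-n/2}$ and is \emph{independent of the index} $i$ apart from the cardinality $\vert S_{i,\gamma}\vert$. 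This collapses the sum to an arithmetic count: the $i=1$ term and the boundary term contribute fixed multiples of $q^{-n/2}$, while the $n-1$ terms with $2\le i\le n$ each contribute the same multiple of $q^{-n/2}$, producing the linear-in-$n$ coefficient.

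I expect no serious obstacle here; the only point requiring care is bookkeeping the constant ($i=1$) versus generic ($i\ge 2$) cardinalities and the separate boundary shadow $\cali{O}_{\gamma^{-1}(x_0)}$, and checking that the various powers of $d=q+1$ and $q=d-1$ combine to give precisely the stated coefficient $\frac{q-1}{q+1}$ of $n$ and the constant term $1$. A convenient sanity check is the case $n=0$ (i.e. $\gamma(x_0)=x_0$): then the cocycle is identically $1$, the integral equals $\mu_{x_0}(\partial T)=1$, which matches the formula $\bigl(1+\frac{q-1}{q+1}\cdot 0\bigr)q^{0}=1$; one can likewise verify $n=1$ by hand to pin down the normalization before committing to the general algebraic simplification.
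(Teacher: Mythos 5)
Your overall strategy is exactly the paper's: partition $\partial T$ into the shadows $\cali{O}_{\gamma^{-1}(x_0)}$ and $\cali{O}_y$ for $y \in S_{i,\gamma}$, use the constancy of $\beta_\xi(x_0,\gamma^{-1}(x_0))$ on each piece, and sum. However, there is a concrete error in the one numerical input you did not take from the preceding lemma, namely the measure of a shadow, and with your value the computation does not produce the stated formula. For $y$ at distance $i$ from $x_0$, the $d(d-1)^{i-1}$ shadows of the vertices of the sphere of radius $i$ partition $\partial T$ into sets of equal measure (by isotropy), so $\mu_{x_0}(\cali{O}_y)=\fra{1}{d(d-1)^{i-1}}=\fra{1}{dq^{i-1}}$, not $\fra{1}{dq^{i-2}}$. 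Your value comes from reading part (1) of the lemma as saying that $\cali{O}_y$ is ``the ball of radius $e^{-i+1}$'' and then applying the formula for a ball of radius $e^{-j}$ with $j=i-1$; but since the Gromov product is integer-valued, the open ball $\{b' : d_{x_0}(b,b')<e^{-i+1}\}$ coincides with $\{b' : (b\vert b')_{x_0}\ge i\}$, which is the (closed) ball of radius $e^{-i}$ in the sense of the normalization $\mu_x(B)=\fra{1}{d(d-1)^{i-1}}$. Note that you are already inconsistent with yourself: for the terminal shadow you correctly use $\mu_{x_0}(\cali{O}_{\gamma^{-1}(x_0)})=\fra{1}{dq^{n-1}}$ (exponent $n-1$ for a vertex at distance $n$), while for $\cali{O}_y$ you use exponent $i-2$ for a vertex at distance $i$.

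This is not a harmless bookkeeping slip: carrying out the sum you wrote down with $\mu_{x_0}(\cali{O}_y)=\fra{1}{dq^{i-2}}$ gives
\[
\fra{q^{1-n/2}}{d}\Bigl(1+q+(n-1)(q-1)\Bigr),
\]
which differs from the claimed $\bigl(1+\fra{q-1}{q+1}n\bigr)q^{-n/2}=\fra{q^{-n/2}}{d}\bigl(q+1+n(q-1)\bigr)$ for every $q>1$; for instance at $n=1$ it gives $\sqrt{q}$, which already exceeds $1=\Vert\ungra_B\Vert_2^2$ and is therefore impossible for a Harish--Chandra function. With the correct measure $\fra{1}{dq^{i-1}}$ each summand becomes $\vert S_{i,\gamma}\vert\,\fra{q^{-n/2}}{d}$ and the boundary term becomes $\fra{q}{d}q^{-n/2}$, so the bracket is $2q+(n-1)(q-1)=(q+1)+n(q-1)$ and the stated identity follows, exactly as in the paper. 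So the fix is local, but as written the key computation fails.
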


\begin{proof} Using the information collected in the above lemma, we do the following calculation:

\[\begin{array}{rcl}
\displaystyle\int_{\partial T} c(\gamma,b)^{\frac{1}{2}} \diff\mu(b) &= &\displaystyle\int_{\cali{O}_{\gamma^{-1}x_0}} c(\gamma,b)^{\frac{1}{2}}\diff\mu(b) + \sum^n_{i = 1} \sum_{y \in S_{i,\gamma}} \int_{\cali{O}_y} c(\gamma,b)^{\frac{1}{2}}\diff\mu(b)\\
&= &\displaystyle\mu\left(\cali{O}_{\gamma^{-1}x_0}\right)(d-1)^{\frac{n}{2}} + \sum^n_{i = 1} \sum_{y \in S_{i,\gamma}} \mu\left(\cali{O}_y\right)(d-1)^{i - 1 - \frac{n}{2}}\\
&= &\fra{1}{d(d-1)^{n-1}} (d-1)^{\frac{n}{2}} + \sum^n_{i = 1} \sum_{y \in S_{i,\gamma}} \fra{1}{d(d-1)^{i-1}}(d-1)^{i - 1 - \frac{n}{2}}\\
&= &\frac{1}{d}\displaystyle\left[(d-1)^{-\frac{n}{2} + 1} + \sum^n_{i = 1} \sum_{y \in S_{i,\gamma}} (d-1)^{-\frac{n}{2}}\right]\\
&= &\frac{1}{d}\displaystyle(d-1)^{-\frac{n}{2}}\left[d-1 + \sum^n_{i=1} \left\vert S_{i,\gamma}\right\vert\right]\\
&= &\frac{1}{d}\displaystyle(d-1)^{-\frac{n}{2}}\left[d-1 + d- 1 + (n-1)(d-2)\right]\\
&= &\displaystyle\left(1 + \frac{d-2}{d}n\right)(d-1)^{-\frac{n}{2}}\\
\end{array}\]
\end{proof}

\begin{prop}[The Harish-Chandra estimate] \label{lemma: HC}Let $\Xi$ be the Harish-Chandra function $\Xi$ associated to the action $G \curvearrowright (\partial T_0 \times \partial T_\infty, \mu_{x_0} \otimes \mu_{x_\infty})$. We then have
\begin{enumerate}
\item  for all $g := (g_0,g_\infty) \in G$,
\[\Xi(g) = \displaystyle\left(1 + \frac{q-1}{q+1}L(g) + \left(\frac{q-1}{q+1}\right)^2L_0(g_0)L_\infty(g_\infty)\right)q^{-\frac{L(g)}{2}} ;\]
\item $\Xi : G \rightarrow \rr$ is continuous ;
\item we have the following estimate: for all $g \in G$ (and therefore, for every $g \in \Gamma$),
\[
\Xi(g) \ll L(g)^2(d-1)^{-\frac{L(g)}{2}}.
\]
\end{enumerate}
\end{prop}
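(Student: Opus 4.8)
The plan is to prove the three assertions of Proposition \ref{lemma: HC} essentially in sequence, with the factorization provided by Lemma \ref{productaction} doing almost all of the work. First I would invoke Lemma \ref{productaction} to write $\Xi(g_0,g_\infty)$ as a product of the two single-tree integrals, and then substitute the closed form obtained in Proposition \ref{harishchandraarbre} for each factor. Since both trees are $(q+1)$-regular, we have $d = q+1$, so $d-2 = q-1$ and $(d-2)/d = (q-1)/(q+1)$; applying Proposition \ref{harishchandraarbre} with $n_0 := L_0(g_0) = d_0(v_0, g_0 v_0)$ and $n_\infty := L_\infty(g_\infty) = d_\infty(v_\infty, g_\infty v_\infty)$ gives
\[
\Xi(g) = \left(1 + \tfrac{q-1}{q+1} n_0\right)\left(1 + \tfrac{q-1}{q+1} n_\infty\right) q^{-\frac{n_0}{2}} q^{-\frac{n_\infty}{2}}.
\]
Expanding the product of the two linear factors and using $L(g) = n_0 + n_\infty$ together with $q^{-n_0/2}q^{-n_\infty/2} = q^{-L(g)/2}$ yields exactly the expression in assertion (1). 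This is a purely algebraic rearrangement and should be the most routine part.

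For assertion (2), continuity of $\Xi$ follows from the explicit formula in (1): the length functions $L_0$ and $L_\infty$ are built from the displacement distances $d_i(v_i, g_i v_i)$, which are locally constant in $g_i$ because the action of $G_i$ on its Bruhat--Tits tree is simplicial and $G_i$ carries its natural (totally disconnected) topology. Since $g \mapsto L_i(g_i)$ is locally constant, it is in particular continuous, and $\Xi$ is a fixed continuous (indeed polynomial-in-$L_0,L_\infty$ times exponential) function of these, hence continuous. I would state this briefly, the key observation being that $L_0, L_\infty$ are integer-valued and locally constant.

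For assertion (3), the estimate $\Xi(g) \ll L(g)^2 (d-1)^{-L(g)/2}$, I would bound the expression from (1) crudely. Writing $L = L(g)$ and recalling $q = d-1$, each linear-in-$L_i$ factor is at most $1 + \tfrac{q-1}{q+1}L \leq C(1+L)$ for a suitable constant, so their product (and the cross term $L_0 L_\infty \leq L^2$) is $O((1+L)^2) = O(L^2)$ for large $L$, while the exponential factor is exactly $q^{-L/2} = (d-1)^{-L/2}$. More carefully, using $L_0 L_\infty \leq (L_0 + L_\infty)^2 = L^2$ and $L_0 + L_\infty = L$ directly on the form given in (1) shows the polynomial prefactor is bounded above by a quadratic in $L$, which gives the claimed $\ll$. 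The one point requiring a little care is the behaviour when $L$ is small or when one of $L_0, L_\infty$ vanishes, but since the $\ll$ notation only concerns sufficiently large $n$ this causes no difficulty.

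The main obstacle, such as it is, is not conceptual but bookkeeping: one must correctly match the variable $n$ in Proposition \ref{harishchandraarbre} to the two separate displacements $L_0(g_0)$ and $L_\infty(g_\infty)$, and take care that Proposition \ref{harishchandraarbre} is stated with $\gamma(x_0)$ whereas the Harish--Chandra integral uses $c(\gamma^{-1}, \cdot)$, so that the relevant length is $d(x_0, \gamma^{-1}(x_0)) = d(x_0, \gamma(x_0))$ by the symmetry of the length function; this symmetry is exactly property (2) of a length function and reconciles the two. Once the indices are aligned the whole proposition reduces to the algebraic expansion in (1) and the trivial polynomial bound in (3).
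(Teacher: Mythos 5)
Your proposal is correct and follows exactly the paper's own route: factor $\Xi$ via Lemma \ref{productaction}, substitute the closed form of Proposition \ref{harishchandraarbre} for each tree, expand using $L = L_0 + L_\infty$, and observe that continuity and the quadratic bound follow from the explicit formula. The paper treats (2) and (3) as immediate consequences of (1), whereas you spell out the local constancy of $L_0, L_\infty$ and the bookkeeping between $c(\gamma,\cdot)$ and $c(\gamma^{-1},\cdot)$; these added details are accurate and do not change the argument.
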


\begin{proof} The last two claims follow immediately from the first. We apply together Proposition \ref{harishchandraarbre} and Lemma \ref{productaction}: let $g:=(g_0,g_\infty) \in G$. We then have \[\begin{array}{rcl}
\Xi(g) &= &\left(1+\fra{q-1}{q+1}L_0(g_0)\right)q^{-\frac{L_0(g_0)}{2}}\left(1+\fra{q-1}{q+1}L_{\infty}(g_\infty)\right)q^{-\frac{L_{\infty}(g_\infty)}{2}}\\
&= &\displaystyle\left(1 + \frac{q-1}{q+1}(L_0(g_0) + L_\infty(g_\infty)) + \left(\frac{q-1}{q+1}\right)^2L_0(g_0)L_\infty(g_\infty)\right)q^{-\frac{L(g)}{2}}.\\
\end{array}\]
\end{proof}

\subsection{Comparing  the decay of the Harish-Chandra function with the volume growth in the group $\sln_2(\mathbb F_q[X,X^{-1}])$}
We are ready to prove condition $(1)$ from the criterion for the group $\Gamma=\sln_2(\mathbb F_q[X,X^{-1}])$ and its action on 
$\partial T_0 \times \partial T_\infty$. 

\begin{prop}\label{first ingredient shalom trick}
We have that
\[
	\sup_{\gamma\in C^\Gamma_n}\Xi(\gamma) \sqrt{|C^\Gamma_n|}\ll n^{5/2}.
\]
\end{prop}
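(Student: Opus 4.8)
The plan is to combine two already-established ingredients: the explicit Harish-Chandra estimate from Proposition \ref{lemma: HC} and the growth estimate on spheres coming from Lemma \ref{lemma: ball counting}. First I would control the supremum $\sup_{\gamma \in C^\Gamma_n} \Xi(\gamma)$. Since every $\gamma \in C^\Gamma_n$ satisfies $L(\gamma) = n$ (recall $C^\Gamma_n$ consists of elements of length exactly $n$), part $(3)$ of Proposition \ref{lemma: HC} gives immediately \[\sup_{\gamma \in C^\Gamma_n} \Xi(\gamma) \ll n^2 (d-1)^{-n/2},\] where $d = q+1$, so $(d-1) = q$. This is the easy half, as the estimate is uniform over the sphere.

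Next I would bound $\sqrt{|C^\Gamma_n|}$. The key point is that $|C^\Gamma_n|$ grows like $\vert B_n \vert$ up to controlled factors. From Proposition \ref{groupgrowth} we have $\mu(G_n) \ll n(d-1)^n$, and from Proposition \ref{growthestimates} there is a constant $c$ with $\mu(C^G_n) \leq c \vert C^\Gamma_n \vert$; I would also need the reverse inequality $\vert C^\Gamma_n \vert \ll \mu(G_n)$ (or a direct counting argument) to conclude that $\vert C^\Gamma_n \vert \ll n(d-1)^n$. The cleanest route is to note that $C^\Gamma_n \subseteq G_n \setminus G_{n-1}$ and that the orbit map $\gamma \mapsto \gamma(v_0,v_\infty)$ is injective on $\Gamma$ with image inside $B_n$, since $\Gamma$ acts freely (the vertex stabilizers in $G$ meet $\Gamma$ trivially as $\Gamma$ is a lattice acting with $\Gamma \cap G_0 = \{e\}$), so that $\vert C^\Gamma_n \vert \leq \vert B_n \vert \ll n(d-1)^n$ by Lemma \ref{lemma: ball counting}. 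Taking square roots yields \[\sqrt{\vert C^\Gamma_n \vert} \ll n^{1/2}(d-1)^{n/2}.\]

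Finally I would simply multiply the two bounds: \[\sup_{\gamma \in C^\Gamma_n} \Xi(\gamma)\sqrt{\vert C^\Gamma_n \vert} \ll n^2 (d-1)^{-n/2} \cdot n^{1/2}(d-1)^{n/2} = n^{5/2},\] where the exponential factors $(d-1)^{\pm n/2}$ cancel exactly. This cancellation is the whole mechanism behind condition $(1)$ of the criterion: the polynomial decay-rate of $\Xi$ on the sphere is precisely the inverse square root of the sphere's cardinality, up to a polynomial factor in $n$.

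I expect the main obstacle to be the injectivity/freeness argument needed to pass from $\vert C^\Gamma_n \vert$ to $\vert B_n \vert$, i.e.\ making rigorous the bound $\vert C^\Gamma_n \vert \ll n(d-1)^n$. Strictly speaking the orbit map need not be injective unless $\Gamma$ acts freely on vertices, so the honest statement is that the fibres of $\theta$ restricted to $\Gamma$ are contained in $\Gamma$-translates of the finite stabilizer $\Gamma \cap \mathrm{Stab}_G(v_0,v_\infty)$, which is finite; this only changes the counting by a bounded multiplicative constant and does not affect the $\ll$ conclusion. An alternative, perhaps cleaner, is to invoke Proposition \ref{growthestimates} together with Proposition \ref{groupgrowth} directly — if one has both $\mu(C^G_n) \leq c\vert C^\Gamma_n\vert$ and $\vert C^\Gamma_n \vert \ll \mu(C^G_n)$ (the latter from a packing argument, covering each lattice point by a fixed-volume neighborhood of bounded length distortion as in Lemma \ref{stability}), then $\vert C^\Gamma_n\vert \asymp \mu(C^G_n) \ll n(d-1)^n$ follows without appealing to freeness at all.
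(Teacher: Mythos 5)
Your proof is correct and follows essentially the same route as the paper: combine the Harish-Chandra estimate of Proposition \ref{lemma: HC} with the ball-counting bound $\vert C^\Gamma_n\vert \leq \vert \Gamma_n\vert \ll n(d-1)^n$ coming from Lemma \ref{lemma: ball counting}, and let the factors $(d-1)^{\pm n/2}$ cancel. Your additional care about the fibres of the orbit map (they are cosets of the finite stabilizer $\Gamma\cap\mathrm{Stab}_G(v_0,v_\infty)$, so the count changes only by a bounded constant) correctly fills in a step the paper leaves implicit when it writes ``since $C^\Gamma_n\subset\Gamma_n\subset G_n$, applying Lemma \ref{lemma: ball counting}''.
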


\begin{proof}

On the one hand, we apply Proposition \ref{lemma: HC} and we obtain:
\[
\sup_{\gamma\in C_n}\Xi(\gamma)\leq \left(1 + \frac{d-2}{d}\mbox{ }n+\frac{1}{2}\left(\frac{d-2}{d}\right)^2n^2\right)(d-1)^{-\frac{n}{2}}.
\]
Therefore, we have \[\sup_{\gamma\in C_n}\Xi(\gamma) \ll n^2(d-1)^{-\frac{n}{2}}.\] 
On the other hand, since $C^\Gamma_n\subset\Gamma_n\subset G_n$, applying Lemma \ref{lemma: ball counting}, we obtain (recall that $q=d-1$)
\[
	\sqrt{|C^\Gamma_n|}\ll \sqrt{n(d-1)^n} = n^{\frac{1}{2}}(d-1)^{\frac{n}{2}}.\]
So, we have,
\[
	\sup_{\gamma\in C^\Gamma_n}\Xi(\gamma) \sqrt{|C^\Gamma_n|}\ll n^{5/2}.
\]
\end{proof}

\section{Proof of Corollary \ref{coro trois}}

\label{nord}

\subsection{RD and amenable subgroups of exponential growth}

Let us recall three easy lemmas, proved in \cite{GARNCRD}, which hold for any finitely generated group $\Lambda$:

\begin{lem} 
\begin{enumerate}
\item If a finitely generated group $\Lambda$ has RD with respect to some length function, then it has RD with respect to the word length associated with any finite symmetric generating set.
\item If a discrete group $\Lambda$ has RD with respect to some length function $L$, then each subgroup $H \leq \Lambda$ has RD with respect to the induced length function $L_{\vert_H}$.
\item If an amenable finitely-generated group has RRD with respect to some length function $L$, then it has polynomial growth with respect to $L$.
\end{enumerate}
\end{lem}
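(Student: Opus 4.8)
The plan is to treat the three statements in turn: the first two are formal consequences of the definition of RD, while the third uses amenability in an essential way. For (1), the key observation is that RD passes to any length function that dominates the original one. Fix a finite symmetric generating set $S$ and write $\vert\cdot\vert_S$ for the associated word length, with $\vert f\vert_S := \max\{\vert\gamma\vert_S \tq \gamma\in\supp f\}$. Setting $C := \max_{s\in S} L(s) < \infty$, the triangle inequality for $L$ gives $L(\gamma)\leq C\vert\gamma\vert_S$ for every $\gamma\in\Lambda$, hence $L(f)\leq C\vert f\vert_S$ for every $f\in\cc[\Lambda]$. After replacing $P$ by a nonnegative non-decreasing polynomial majorant (which leaves the RD inequality valid), the RD inequality for $L$ together with monotonicity of $P$ yields $\opnorm f\opnorm_{op}\leq P(L(f))\Vert f\Vert_2\leq P(C\vert f\vert_S)\Vert f\Vert_2$; since $t\mapsto P(Ct)$ is again a polynomial, this is exactly RD with respect to $\vert\cdot\vert_S$.

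For (2), let $H\leq\Lambda$ and $f\in\cc[H]$. Extend $f$ by zero to $\tilde f\in\cc[\Lambda]$, so that $\Vert\tilde f\Vert_2=\Vert f\Vert_2$ and $L(\tilde f)=L_{\vert_H}(f)$. The crucial point is that $\opnorm f\opnorm_{op}$, computed in the left regular representation of $H$ on $\ell^2(H)$, equals $\opnorm\tilde f\opnorm_{op}$ computed on $\ell^2(\Lambda)$. Indeed, $\ell^2(\Lambda)$ decomposes as the orthogonal sum $\bigoplus_{Hg\in H\backslash\Lambda}\ell^2(Hg)$ indexed by the right cosets of $H$; left convolution by $f\in\cc[H]$ preserves each summand, and on each $\ell^2(Hg)$ it is unitarily equivalent to the left regular representation of $H$ on $\ell^2(H)$. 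Hence the norm on $\ell^2(\Lambda)$ is the supremum over cosets of the norm on $\ell^2(H)$, which is just $\opnorm f\opnorm_{op}$. Applying RD for $\Lambda$ to $\tilde f$ then gives $\opnorm f\opnorm_{op}=\opnorm\tilde f\opnorm_{op}\leq P(L(\tilde f))\Vert\tilde f\Vert_2=P(L_{\vert_H}(f))\Vert f\Vert_2$, which is RD for $H$ with respect to $L_{\vert_H}$.

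For (3), I use the characterization of amenability by weak containment of the trivial representation in the left regular representation: for an amenable group and any nonnegative $f\in\cc[\Lambda]$ one has $\opnorm f\opnorm_{op}=\Vert f\Vert_1=\sum_\gamma f(\gamma)$. Here the inequality $\opnorm f\opnorm_{op}\leq\Vert f\Vert_1$ is automatic, and the reverse is obtained by testing $\langle \lambda(f)\xi,\xi\rangle$ against a Følner sequence of almost-invariant unit vectors $\xi$. I apply this to the radial, nonnegative function $\ungra_{C_n}$, whose support is the level set $C_n=L^{-1}([n,n+1))$; it is radial precisely because it is the indicator of a union of level sets of $L$, and it has finite support since $L$ is proper. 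On one hand, amenability gives $\opnorm\ungra_{C_n}\opnorm_{op}=\vert C_n\vert$; on the other hand, since $L(\ungra_{C_n})<n+1$ and $P$ may be taken non-decreasing, RRD gives $\opnorm\ungra_{C_n}\opnorm_{op}\leq P(L(\ungra_{C_n}))\Vert\ungra_{C_n}\Vert_2\leq P(n+1)\sqrt{\vert C_n\vert}$. Combining, $\vert C_n\vert\leq P(n+1)\sqrt{\vert C_n\vert}$, so $\sqrt{\vert C_n\vert}\leq P(n+1)$ and $\vert C_n\vert\leq P(n+1)^2$; summing over the spheres up to radius $n$ shows the balls grow polynomially, i.e. $\Lambda$ has polynomial growth with respect to $L$.

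The bookkeeping in (1) and (2) is routine; the one step that genuinely uses structure is the amenability identity $\opnorm f\opnorm_{op}=\Vert f\Vert_1$ for nonnegative $f$ in (3). I expect the lower bound $\opnorm f\opnorm_{op}\geq\Vert f\Vert_1$, obtained via a Følner sequence, to be the main point requiring care.
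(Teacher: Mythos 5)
Your three arguments are correct, and they are exactly the standard proofs: the paper itself does not prove this lemma but delegates it to the cited reference, and what you have written is the content of that citation (domination of $L$ by the word length for (1), the decomposition $\ell^2(\Lambda)\cong\bigoplus_{Hg\in H\backslash\Lambda}\ell^2(Hg)$ for (2), and the identity $\opnorm f\opnorm_{op}=\Vert f\Vert_1$ for nonnegative $f$ on amenable groups applied to the radial functions $\ungra_{C_n}$ for (3)). The only point worth making explicit is that in (3) the finiteness of $C_n$ — hence the admissibility of $\ungra_{C_n}$ as a test function — rests on the properness of $L$, which is part of the paper's standing hypotheses and which you correctly invoke.
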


These three lemmas are combined in the following criterion, useful to prove that some discrete groups do not have RD:

\begin{prop} \label{obstructamena}Let $\Lambda$ be a discrete group and $H$ be an amenable finitely-generated subgroup of $\Lambda$. Then if $H$ has exponential growth with respect to some of its word-lengths, then $\Lambda$ does not have RD with respect to any of its proper length functions.
\end{prop}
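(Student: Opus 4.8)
The plan is to deduce Proposition \ref{obstructamena} directly by combining the three statements of the preceding lemma. The logical structure is a contrapositive argument: I will assume that $\Lambda$ has RD with respect to some proper length function and derive a contradiction with the hypothesis that $H$ has exponential growth.

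First I would invoke part (1) of the lemma. Since $\Lambda$ is finitely generated and has RD with respect to some length function, it has RD with respect to the word length $\ell$ associated to any fixed finite symmetric generating set of $\Lambda$. This reduces us to the case of a genuine word-length on $\Lambda$. Next, I would apply part (2): the subgroup $H \leq \Lambda$ then has RD with respect to the restriction $\ell_{|_H}$ of this word-length to $H$. The key point here is that RD (and hence a fortiori RRD) passes to subgroups with the induced length function. In particular $H$ has RRD with respect to $\ell_{|_H}$.

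Now comes the step where amenability enters. The restriction $\ell_{|_H}$ is a proper length function on the finitely generated amenable group $H$, so by part (3) of the lemma, $H$ must have polynomial growth \emph{with respect to} $\ell_{|_H}$. The final ingredient is to compare growth with respect to $\ell_{|_H}$ and growth with respect to a word-length on $H$ itself. Since $\ell_{|_H}$ is dominated by (a constant multiple of) any word-length of $H$ — indeed, the word-length on $\Lambda$ of an element of $H$ is at most the word-length of that element in $H$ times the maximal $\Lambda$-length of the chosen generators of $H$ — polynomial growth in $\ell_{|_H}$ forces polynomial growth in the word-length of $H$. This contradicts the assumption that $H$ has exponential growth, completing the argument.

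The main obstacle, such as it is, lies entirely in this last comparison of length functions: one must check that the restricted length $\ell_{|_H}$ and an honest word-length on $H$ are comparable in the direction needed (namely that $\ell_{|_H}$ is bounded above by a multiple of the $H$-word-length, so that polynomial growth transfers correctly). Everything else is a formal concatenation of the three lemmas, and no genuinely new estimate is required.
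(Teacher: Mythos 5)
Your overall strategy --- contrapose, reduce to $H$, invoke the polynomial-growth consequence of RRD for amenable groups, and finish with a length comparison --- is the right one, and it is essentially what the paper intends (the paper gives no written proof; it simply asserts that the three lemmas combine). But your very first step uses a hypothesis you do not have: you apply part (1) of the lemma to $\Lambda$, saying ``since $\Lambda$ is finitely generated\dots''. The proposition only assumes $\Lambda$ is a discrete group; it is $H$ that is assumed finitely generated. As written, your argument does not cover, say, an infinitely generated $\Lambda$ containing a lamplighter, which is exactly the kind of generality the statement claims.

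The fix is to reorder the steps so that everything after the first reduction happens inside $H$: assume $\Lambda$ has RD with respect to some proper length function $L$; by part (2), $H$ has RD with respect to $L_{\vert_H}$ (which is proper on $H$ since $H_t \subseteq \Lambda_t$); then apply part (1) to the finitely generated group $H$ to get RD, hence RRD, for the word-length of $H$; then part (3) gives polynomial growth of $H$ for that word-length, contradicting exponential growth. Note that your closing paragraph --- the observation that $L_{\vert_H}$ is dominated by a constant multiple of the $H$-word-length, so polynomial growth transfers --- is precisely the content of part (1) applied to $H$, carried out by hand; once you do that comparison you never needed to invoke part (1) for $\Lambda$ at all. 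The comparison itself is stated in the correct direction ($L_{\vert_H}(h) \leq C\,\ell_H(h)$ with $C$ the maximal $L$-length of the chosen generators of $H$, so $H$-word-balls of radius $n$ sit inside $L_{\vert_H}$-balls of radius $Cn$), so the rest of your argument is sound.
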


\subsection{The lamplighter subgroup}

Let us consider the subgroup \[H := \left\{\left(\begin{array}{cc}X^n &P\\0 & X^{-n}\\ \end{array}\right) \tq n \in \zz ,\ \  P \in A\right\}\]of $\sln_2(A)$ and let $S$ denote the finite subset of $H$ \[\left\{\left(\begin{array}{cc}X &0\\0 & X^{-1}\\ \end{array}\right),\left(\begin{array}{cc}X^{-1} &0\\0 & X\\ \end{array}\right),\left(\begin{array}{cc}1 &\pm 1\\0 & 1\\ \end{array}\right),\left(\begin{array}{cc}1 &\pm X\\0 & 1\\ \end{array}\right)\right\}.\]

The following proposition is a routine exercise for people working in geometric group theory. We give the proof for readers with a different background.

\begin{prop} \label{lamplighter} The subgroup $H$ is amenable, $S$ is a symmetric generating set of $H$, and $H$ has exponential growth with respect to the word-length associated to $S$.
\end{prop}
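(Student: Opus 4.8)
The plan is to establish the three assertions of Proposition \ref{lamplighter} in the order stated: first that $H$ is amenable, then that $S$ generates $H$, and finally that the growth with respect to $S$ is exponential. The amenability is the cheapest step. I would observe that the map sending a matrix to its top-left entry $X^n$ gives a surjective homomorphism $H \to \zz$ (using that the product of two upper-triangular matrices of this form multiplies the diagonal entries), whose kernel is the abelian group of unipotent matrices $\left(\begin{smallmatrix} 1 & P \\ 0 & 1 \end{smallmatrix}\right)$ with $P \in A$. Thus $H$ is an extension of an abelian group by $\zz$, hence (metabelian, in particular) solvable, and therefore amenable. In fact the conjugation action of $X^n$ on the unipotent part is multiplication of $P$ by $X^{2n}$, which exhibits $H$ as the semidirect product $A \rtimes \zz$ where $\zz$ acts by this multiplication; this is precisely a lamplighter-type structure and will be the key to the growth estimate.

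Next I would check that $S$ generates $H$. Writing $t := \left(\begin{smallmatrix} X & 0 \\ 0 & X^{-1} \end{smallmatrix}\right)$ and $u_c := \left(\begin{smallmatrix} 1 & c \\ 0 & 1 \end{smallmatrix}\right)$, a direct computation gives $t^n u_c t^{-n} = u_{X^{2n} c}$. Starting from $u_1$ and $u_X$ (which are in $S$, together with their inverses $u_{-1}, u_{-X}$) and conjugating by powers of $t$, I obtain $u_{X^m}$ for every $m \in \zz$ — even exponents from $u_1$ and odd exponents from $u_X$. Since the $u_{X^m}$ and their $\ff_q$-scalar analogues generate the full unipotent subgroup $\{u_P : P \in A\}$ as the additive structure of $A$ is spanned by the monomials $X^m$ (here I must also produce arbitrary $\ff_q$-coefficients, which follows because repeated addition of $u_1$ yields $u_k$ for every $k \in \ff_q$ in characteristic $p$, and then conjugation spreads these across all monomials), and since $t$ generates the $\zz$ quotient, the set $S$ generates $H$.

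Finally, the exponential growth. The natural approach is to exhibit at least $q^{\,\lfloor cn\rfloor}$ distinct elements in the ball of radius $n$ for the word metric $|\cdot|_S$. Using the semidirect product picture, an element of the form $u_P$ with $P = \sum_{|m| \le k} a_m X^m$ can be built from a word of length roughly proportional to $k$: one walks $t$ up to $X^{2k}$ and back, depositing an $\ff_q$-coefficient at each visited "lamp" via a bounded number of $u_{\pm 1}$ multiplications — exactly the lamplighter normal-form argument. This shows that all $u_P$ with $\deg$-spread at most $k$, of which there are $q^{\Theta(k)}$, lie in a ball of radius $O(k)$, giving a lower bound of the shape $|B_n| \gg \lambda^n$ for some $\lambda > 1$. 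The upper bound $|B_n| \le (\#S + 1)^n$ is automatic for any finite generating set. I expect the main obstacle to be the growth estimate: one must carefully bound the word length of the normal form $t^n u_P$ both above and below, making sure the $t$-conjugation cost (which is linear in the degree of the monomials involved, and hence the dominant contribution) is correctly accounted for, so that the number of reachable unipotent elements really is exponential rather than merely super-polynomial. The amenability and the generation claims are essentially formal once the semidirect-product decomposition $H \cong A \rtimes_{X^2} \zz$ is made explicit.
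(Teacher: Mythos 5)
Your treatment of amenability and of exponential growth coincides in substance with the paper's. For amenability the paper uses exactly your extension $0 \to \ff_q[X,X^{-1}] \to H \to \zz \to 0$ (solvable, hence amenable), and for growth it implements your ``walk with $t$ and deposit coefficients'' normal form concretely: writing $t$ for the diagonal generator and $u_c$ for the unipotent matrix with upper-right entry $c$, the recursion $A_0 := u_{a_n}$, $A_{j+1} := tA_jt^{-1}u_{a_{n-(j+1)}}$ expresses $u_P$ for $P=\sum_{i=0}^{n}a_iX^{2i}$, $a_i\in\{0,1\}$, as a word of length at most $3n+1$ in $S$, which places $2^{n+1}$ distinct elements in the ball of radius $3n+1$. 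Your plan for the lower bound is exactly this argument, and your worry about the conjugation cost is resolved by that explicit recursion; the upper bound is, as you say, automatic.

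There is, however, a genuine gap in your generation step, at precisely the point you tried to wave through. Repeated addition of $u_1$ (i.e.\ taking powers $u_1^k=u_{k\cdot 1}$) produces only the coefficients $k\cdot 1$ lying in the prime subfield $\ff_p\subset\ff_q$; when $q=p^e$ with $e>1$ these do not exhaust $\ff_q$. Since conjugation by $t$ only multiplies the upper-right entry by $X^{\pm 2}$, the subgroup generated by $S$ is $\bigl\{\left(\begin{smallmatrix}X^n&P\\0&X^{-n}\end{smallmatrix}\right) : n\in\zz,\ P\in\ff_p[X,X^{-1}]\bigr\}$, which is a proper subgroup of $H$ unless $q$ is prime. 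So your claim that ``repeated addition of $u_1$ yields $u_k$ for every $k\in\ff_q$'' is false in general, and the generation assertion needs either the restriction to prime $q$ or an enlargement of $S$ by $u_a$ for $a$ running over an $\ff_p$-basis of $\ff_q$. (The paper's own proof silently omits the generation claim altogether.) This does not damage the intended application: $\langle S\rangle$ is still an amenable, finitely generated subgroup of exponential growth, and your count of reachable unipotent elements should simply read $p^{\Theta(k)}$ (or $2^{\Theta(k)}$, as in the paper) rather than $q^{\Theta(k)}$, which is still exponential.
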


\begin{proof} If $P \in \ff_q[X,X^{-1}]$, define $$\gamma(P) := \left(\begin{array}{cc}1 &P\\
0&1\\
\end{array}\right)$$ so that $\gamma : \ff_q[X, X^{-1}] \rightarrow H$ is a morphism. Define also \[
\psi\left(
\left(\begin{array}{cc}X^n &P\\0 & X^{-n}\\ \end{array}\right)\right):= n$$ so that $\psi : H \rightarrow \zz$ is a morphism. Then $$\xymatrix{
0 \ar[r] &\ff_q[X,X^{-1}] \ar^-{\gamma}[r] &H \ar^{\psi}[r] &\zz \ar[r] &0\\
}\] is a short exact sequence so $H$ is solvable, hence amenable.

Now let us prove that $H$ has exponential growth with respect to the word-length associated to $S$. Let $n \in \nn$, and $P := \sum^{n}_{i = 0} a_i X^{2i}$, where $a_i \in \{0,1\}$. There are $2^{n+1}$ such $P$, and we will prove that every $\gamma(P)$ can be written as a product of $3n + 1$ (or less) elements of $S$.

To do so, define $$A_0 := \left(\begin{array}{cc}
1 &a_n\\
0 &1\\
\end{array}\right)$$

\[A_{j+1} := \left(\begin{array}{cc}
X &0\\
0 &X^{-1}\\
\end{array}\right)A_{j}\left(\begin{array}{cc}
X^{-1} &0\\
0 &X\\
\end{array}\right)\left(\begin{array}{cc}
1 &a_{n-(j+1)}\\
0 &1\\
\end{array}\right)\]

It is straightforward to see that $A_n = \gamma(P)$, and by definition, $A_n$ is the product of (at most) $3n + 1$ elements of $S$.
\end{proof}

\begin{remasub} The subgroup $H$ is a said to be a lamplighter group.
\end{remasub}
We now prove Corollary 2.
\begin{proof} Proposition \ref{obstructamena} shows that if $(\Gamma,L)$ has the property RD, then it cannot contain an amenable finitely-generated exponential subgroup, and Proposition \ref{lamplighter} shows that $H$ is such a subgroup.
\end{proof}

\bibliographystyle{alpha}
\bibliography{bibliord}

\end{document}